\newcommand{\Gr}{\mathrm{Gr}}
\newcommand{\hflf}[1]{{h^{\mathrm{flf}}(#1)}}
\newcommand{\hflfp}[1]{{h_+^{\mathrm{flf}}(#1)}}
\newtheorem{proposition}{Proposition}
\newtheorem{corollary}[proposition]{Corollary}
\newtheorem{lemma}[proposition]{Lemma}
\newtheorem{theorem}[proposition]{Theorem}
\newtheorem*{conjecture*}{Conjecture}
\newtheorem*{theorem*}{Theorem}
\newtheorem*{corollary*}{Corollary}
\newtheorem*{proposition*}{Proposition}
\newtheorem*{lemma*}{Lemma}
\theoremstyle{definition}
\newtheorem{definition}[proposition]{Definition}
\newtheorem*{definition*}{Definition}
\newtheorem*{construction*}{Construction}
\theoremstyle{remark}
\newtheorem{remark}[proposition]{Remark}
\newtheorem*{remark*}{Remark}
\newtheorem{example}[proposition]{Example}
\newtheorem*{example*}{Example}
\newcommand{\id}{\operatorname{id}}
\newcommand{\Z}{\mathbb{Z}}
\let\scr=\mathcal
\let\bb=\mathbb
\newcommand{\Gm}{{\mathbb{G}_m}}
\newcommand{\Gmp}[1]{{\mathbb{G}_m^{\wedge #1}}}
\def\A{\bb A}
\def\P{\bb P}
\newcommand{\1}{\mathbbm{1}}
\newcommand{\eff}{{\text{eff}}}
\newcommand{\veff}{{\text{veff}}}
\newcommand{\SH}{\mathcal{SH}}
\DeclareMathOperator*{\colim}{colim}
\let\lim=\relax
\DeclareMathOperator*{\lim}{lim}
\def\Map{\mathrm{Map}}
\def\iMap{\ul{\mathrm{Map}}}
\def\CMon{\mathrm{CMon}}
\def\PSh{\mathcal{P}}
\def\Spc{\mathcal{S}\mathrm{pc}{}}
\newcommand{\Spec}{\mathrm{Spec}}
\newcommand{\gp}{\mathrm{gp}}
\newcommand{\wequi}{\simeq}
\newcommand{\Mod}{\text{-}\mathcal{M}\mathrm{od}}
\def\adj{\leftrightarrows}
\DeclareRobustCommand{\ul}{\underline}
\def\op{\mathrm{op}}
\let\cat=\mathrm
\def\Sm{{\cat{S}\mathrm{m}}}
\def\Nis{\mathrm{Nis}}
\def\Zar{\mathrm{Zar}}
\def\mot{\mathrm{mot}}
\newcommand{\fr}{\mathrm{fr}}
\def\ph{\mathord-}
\numberwithin{proposition}{section}
\numberwithin{equation}{section}
\renewcommand{\todo}[1]{}
\newcommand{\NB}[1]{}
\newcommand{\NB}[1]{\todo[color=gray!40]{#1}}
\newcommand{\ratC}{\hat{\mathrm C}_1}
\def\EssSm{\Sm^{\text{ess}}}
\def\flf{\mathrm{flf}}
\def\fsyn{\mathrm{fsyn}}
\newcommand{\Cor}{\mathrm{Cor}}
\newcommand{\SHSC}{\SH^{S^1\scr C}}
\newcommand{\SHSfr}{\SH^{S^1\fr}}
\newcommand{\SHS}{\SH^{S^1}\!}
\newcommand{\kgl}{\mathrm{kgl}}
\newcommand{\KGL}{\mathrm{KGL}}
\def\YEAR{\year}\newcount\VOL\VOL=\YEAR\advance\VOL by-1995
\def\firstpage{1}\def\lastpage{1000}
\def\received{}\def\revised{}
\def\communicated{}
\def\magnification{\afterassignment\m@g\count@}
\def\m@g{\mag=\count@\hsize6.5truein\vsize8.9truein\dimen\footins8truein}
\font\eightrm=cmr8
\font\caps=cmcsc10                    
\font\Caps=cmcsc10 scaled \magstep1   
\font\scaps=cmcsc8
\def\DocMath{{\def\th{\thinspace}\scaps Documenta Math.}}
\renewcommand{\@oddfoot}{\hfill\scaps Documenta Mathematica 
    \number\VOL\  (\number\YEAR) \number\firstpage--\lastpage\hfill}
\renewcommand{\@evenfoot}{\ifnum\thepage>\lastpage\hfill\scaps
    Documenta Mathematica \number\VOL\  (\number\YEAR)\hfill\else\@oddfoot\fi}%
\renewcommand{\@evenhead}{%
    \ifnum\thepage>\lastpage\rlap{\thepage}\hfill%
    \else\rlap{\thepage}\slshape\leftmark\hfill{\caps\SAuthor}\hfill\fi}%
\renewcommand{\@oddhead}{%
    \ifnum\thepage=\firstpage{\DocMath\hfill\llap{\thepage}}%
    \else{\slshape\rightmark}\hfill{\caps\STitle}\hfill\llap{\thepage}\fi}%
\def\TSkip{\bigskip}
\newbox\TheTitle{\obeylines\gdef\GetTitle #1
\ShortTitle  #2
\SubTitle    #3
\Author      #4
\ShortAuthor #5
\EndTitle
{\setbox\TheTitle=\vbox{\baselineskip=20pt\let\par=\cr\obeylines%
\halign{\centerline{\Caps##}\cr\noalign{\medskip}\cr#1\cr}}%
	\copy\TheTitle\TSkip\TSkip%
\def\next{#2}\ifx\next\empty\gdef\STitle{#1}\else\gdef\STitle{#2}\fi%
\def\next{#3}\ifx\next\empty%
    \else\setbox\TheTitle=\vbox{\baselineskip=20pt\let\par=\cr\obeylines%
    \halign{\centerline{\caps##} #3\cr}}\copy\TheTitle\TSkip\TSkip\fi%
\centerline{\caps #4}\TSkip\TSkip%
\def\next{#5}\ifx\next\empty\gdef\SAuthor{#4}\else\gdef\SAuthor{#5}\fi%
\ifx\received\empty\relax
    \else\centerline{\eightrm Received: \received}\fi%
\ifx\revised\empty\TSkip%
    \else\centerline{\eightrm Revised: \revised}\TSkip\fi%
\ifx\communicated\empty\relax
    \else\centerline{\eightrm Communicated by \communicated}\fi\TSkip\TSkip%
\catcode'015=5}}\def\Title{\obeylines\GetTitle}
\def\Abstract{\begingroup\narrower
    \parskip=\medskipamount\parindent=0pt{\caps Abstract. }}
\def\EndAbstract{\par\endgroup\TSkip}
\long\def\MSC#1\EndMSC{\def\arg{#1}\ifx\arg\empty\relax\else
     {\par\narrower\noindent%
     2010 Mathematics Subject Classification: #1\par}\fi}
\long\def\KEY#1\EndKEY{\def\arg{#1}\ifx\arg\empty\relax\else
	{\par\narrower\noindent Keywords and Phrases: #1\par}\fi\TSkip}
\newbox\TheAdd\def\Addresses{\vfill\copy\TheAdd\vfill
    \ifodd\number\lastpage\vfill\eject\phantom{.}\vfill\eject\fi}
{\obeylines\gdef\GetAddress #1
\Address #2 
\Address #3
\Address #4
\EndAddress
{\def\xs{4.3truecm}\parindent=0pt
\setbox0=\vtop{{\obeylines\hsize=\xs#1\par}}\def\next{#2}
\ifx\next\empty 
     \setbox\TheAdd=\hbox to\hsize{\hfill\copy0\hfill}
\else\setbox1=\vtop{{\obeylines\hsize=\xs#2\par}}\def\next{#3}
\ifx\next\empty 
     \setbox\TheAdd=\hbox to\hsize{\hfill\copy0\hfill\copy1\hfill}
\else\setbox2=\vtop{{\obeylines\hsize=\xs#3\par}}\def\next{#4}
\ifx\next\empty\ 
     \setbox\TheAdd=\vtop{\hbox to\hsize{\hfill\copy0\hfill\copy1\hfill}
                \vskip20pt\hbox to\hsize{\hfill\copy2\hfill}}
\else\setbox3=\vtop{{\obeylines\hsize=\xs#4\par}}
     \setbox\TheAdd=\vtop{\hbox to\hsize{\hfill\copy0\hfill\copy1\hfill}
	        \vskip20pt\hbox to\hsize{\hfill\copy2\hfill\copy3\hfill}}
\fi\fi\fi\catcode'015=5}}\gdef\Address{\obeylines\GetAddress}
\def\LOCAL{\jobname.files}
\begin{document}

\Title
Cancellation theorem for motivic spaces with finite flat transfers
\ShortTitle 
Cancellation theorem for finite flat correspondences
\SubTitle   
\Author 
Tom Bachmann
\ShortAuthor 
\EndTitle
\Abstract 
We show that the category of motivic spaces with transfers along finite flat morphisms, over a perfect field, satisfies all the properties we have come to expect of good categories of motives.
In particular we establish the analog of Voevodsky's cancellation theorem.
\EndAbstract
\MSC 
14F42, 19E15
\EndMSC
\KEY 
transfers, cancellation theorem, motives
\EndKEY
\Address 
LMU Munich
Mathematisches Institut
Theresienstr. 39
80333 München
Germany
\Address
\Address
\Address
\EndAddress

\setcounter{tocdepth}{1}

\section{Introduction}
\subsection{Main results}
Let $k$ be a perfect field.
Denote by $\Cor^\flf(k)$ the $(2,1)$-category with objects the smooth $k$-schemes and groupoid of morphisms the spans $X \xleftarrow{p} Z \to Y$, where $p$ is finite flat (equivalently, finite locally free).
We write $\Spc^\flf(k)$ for the motivic localization of the non-abelian derived category $\PSh_\Sigma(\Cor^\flf(k))$ of $\Cor^\flf(k)$ and $\SH^\flf(k)$ for the stabilization of $\Spc^\flf(k)$ with respect to (the image of) $\P^1$.
Our main result (see Theorem \ref{thm:main}) is that the canonical functor \[ \Spc^\flf(k)^\gp \to \SH^\flf(k) \] is fully faithful, where $\Spc^\flf(k)^\gp$ denotes the subcategory of grouplike objects.
We also show (see Theorem \ref{thm:present-kgl}) that under the induced adjunction \[ \SH(k) \adj \SH^\flf(k): \mu^* \] we have $\mu^*(\1) \wequi \kgl$, the effective algebraic $K$-theory spectrum \cite[Definition 5.5]{spitzweck2012motivic}.\footnote{I.e. the effective (or equivalently very effective) cover of the algebraic $K$-theory spectrum $\KGL$.}

\subsection{Proof overview}
Voevodsky and Suslin have provided us with a recipe for proving results of the above style \cite{voevodsky2002cancellation,suslin2003grayson} which has since been replicated several times; see e.g. \cite{FaselCancellation,bachmann-criterion,ananyevskiy2016cancellation}.
Essentially, for any sufficiently nice category of correspondences, the motivic localization functor always takes the form $L_\Zar L_{\A^1}$ and consequently can be controlled quite effectively.
Fully faithfulness of stabilization then reduces to constructing for every correspondence \[ \alpha: X \times \Gm \leftarrow Z \to Y \times \Gm \] new correspondences $\rho_n(\alpha)$ from $X$ to $Y$, satisfying certain properties.
In fact in all known cases the correspondences $\rho_n(\alpha)$ are obtained by intersecting with a family of specific cycles on $\Gm \times \Gm$; the complication that always arises is that this intersection need not be ``nice'' any more, e.g. in our case it need no longer be finite flat over $X$.
The main result of this work is Corollary \ref{cor:exhaustive}, where we show that flatness holds for $n$ sufficiently large.
Cancellation is then proved by closely following Voevodsky's original argument, just taking into account that $\Cor^\flf(k)$ is no longer a $1$-category nor locally group complete or $\Z$-linear.

The identification of $\mu^*(\1)$ proceeds in two steps.
We first show that $\mu^*(\1) \in \SH(k)^\veff$.
By an argument of Suslin, this follows from a certain property of $\Cor^\flf(k)$ called \emph{rational contractibility} (see Definitions \ref{def:C-satisfies-ratl-contr} and \ref{def:ratl-contr}).
This is again established by essentially imitating Suslin's proof of rational contractibility for finite correspondences, taking into account the small adjustments needed working with categories that are neither locally group complete, $\Z$-linear nor $1$-categories.
Once this is proved, the identification of $\mu^*(\1)$ with $\kgl$ is essentially the main theorem of \cite{hoyois2020hilbert}.

\subsection{Organization}
In \S\ref{sec:formalism} we review and extend the Voevodsky--Suslin formalism for proving properties of motives built out of sufficiently good categories of correspondences.
The most interesting result here is perhaps Proposition \ref{prop:abstract-cancel}, which reformulates Voevodsky's proof of the cancellation theorem in an abstract setting.
In \S\ref{sec:ffl} we study the category of motivic spaces with finite flat transfers, by showing that it satisfies the assumptions on a ``good'' category of correspondences exposed in the previous section.
Using this together with the formalism, we easily establish our main results in \S\ref{sec:applications}.
In Appendix \ref{app:ratl-contr} we reformulate part of Suslin's theory of rationally contractible presheaves of abelian groups for presheaves of spaces.

\subsection{Notation}\label{subsec:notation}
Unless stated otherwise, by a presheaf we mean a presheaf of spaces.
We write $\Sm_k$ for the category of smooth (qcqs) $k$-schemes, and $\EssSm_k$ for the category of essentially smooth $k$-schemes (by this we mean schemes obtained as cofiltered limits of diagrams of smooth $k$-schemes with affine \emph{étale} transition maps).
In a closed symmetric monoidal $\infty$-category, we denote by $\iMap(\ph,\ph)$ the internal mapping object.
We denote by $\Spc$ the $\infty$-category of spaces, and by $\SH$ the $\infty$-category of spectra.
Given a category $\scr C$ with finite coproducts, we denote by $\PSh_\Sigma(\scr C)$ its nonabelian derived category \cite[\S5.5.8]{lurie-htt}.

\subsection{Acknowledgments}
My understanding of the cancellation theorem comes primarily from discussions with Håkon Kolderup and the participants of the Harvard Thursday seminar, which studied motivic infinite loop space theory in fall of 2019.
I would like to heartily thank all of them.

\section{The motivic formalism} \label{sec:formalism}
\subsection{Generalities}
\subsubsection{}
Let $S$ be a scheme.
We denote by $\Cor^\fr(S)$ the symmetric monoidal $\infty$-category with objects the smooth $S$-schemes and morphism spaces given by the tangentially framed correspondences \cite[\S4]{EHKSY}.
\begin{definition} \label{def:mot-correspondences}
By a \emph{motivic category of correspondences} over $S$ we mean a symmetric monoidal functor $\mu: \Cor^\fr(S) \to \scr C$ satisfying the following conditions:
\begin{enumerate}
\item $\scr C$ is semiadditive, and its tensor product commutes with finite coproducts,
\item $\mu$ preserves finite coproducts and is essentially surjective,
\item $\mu$ is compatible with the Nisnevich topology, a notion to be explained below.
\end{enumerate}
\end{definition}
Given any functor $\mu: \Cor^\fr(S) \to \scr C$ preserving finite coproducts, there is an induced adjunction \[ \mu: \PSh_\Sigma(\Cor^\fr(S)) \adj \PSh_\Sigma(\scr C): \mu^*. \]
Recall also the adjunction \cite[\S3.2.11]{EHKSY} \[ \gamma^*: \PSh_\Sigma(\Sm_S)_* \adj \PSh_\Sigma(\Cor^\fr(S)). \]
We put \[ h^{\scr C} := \gamma_*\mu^*\mu\gamma^*: \PSh_\Sigma(\Sm_S)_* \to \PSh_\Sigma(\Sm_S)_* \quad\text{and}\quad U = \gamma_*\mu^*: \PSh_\Sigma(\scr C) \to \PSh_\Sigma(\Sm_S)_*, \] and we write \[ h_+^\scr{C} = h^\scr{C}((\ph)_+): \PSh_\Sigma(\Sm_S) \to \PSh_\Sigma(\Sm_S)_* \quad\text{and}\quad \mu_+ = \mu(\gamma^*(\ph)_+): \PSh_\Sigma(\Sm_S) \to \PSh_\Sigma(\scr C). \]
When no confusion can arise, we denote also by $\mu$ the composite functors \[ \PSh_\Sigma(\Sm_S)_* \to \PSh_\Sigma(\scr C), \Sm_{S*} \to \scr C, \] by $\mu_+$ the composite functor $\Sm_S \to \scr C$, and so on.
\begin{definition}
Let $U \to X \in \Sm_S$ be a Nisnevich covering and write $R \hookrightarrow X$ for its associated sieve.
Then $R \in \PSh_\Sigma(\Sm_S)$ and we say that $\mu$ \emph{is compatible with the Nisnevich topology} if $h^{\scr C}_+(R) \to h^{\scr C}_+(X) \in \PSh_\Sigma(\Sm_S)_*$ is a Nisnevich equivalence.
\end{definition}

\begin{example}
$\scr C = \Cor^\fr(S)$ and $\mu=\id$ defines a motivic category of correspondences, by \cite[\S3.2]{EHKSY}.
\end{example}

\subsubsection{}
Now let $(\mu, \scr C)$ be a motivic category of correspondences.
Day convolution turns $\PSh_\Sigma(\scr C)$ into a presentably symmetric monoidal category and $\mu: \PSh_\Sigma(\Cor^\fr(S)) \to \PSh_\Sigma(\scr C)$ into a symmetric monoidal functor \cite[Proposition 4.8.1.10]{lurie-ha}.

\subsubsection{}
We call $F \in \PSh_\Sigma(\scr C)$ $\A^1$-local or Nisnevich local if the same is true for the restriction of $F$ to $\Sm_S$, and motivically local if it is both $\A^1$-local and Nisnevich local.
We write $L_{\A^1}$, $L_\Nis$ and $L_\mot$ for the associated localization functors.

\begin{lemma} \label{lemm:C-mot-loc}
The forgetful functors $\mu^*: \PSh_\Sigma(\scr C) \to \PSh_\Sigma(\Cor^\fr(S))$ and $U: \PSh_\Sigma(\scr C) \to \PSh_\Sigma(\Sm_S)_*$ commute with $L_{\A^1}, L_\Nis$ and $L_\mot$.
\end{lemma}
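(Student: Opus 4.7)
The plan is to reduce the lemma to two facts: (i) $U$ and $\mu^*$ are right adjoints between presentable $\infty$-categories, and (ii) each preserves (and reflects) the local objects for the $\A^1$-, Nisnevich-, and motivic localizations. Granting (i) and (ii), a standard fact from the theory of Bousfield localizations of presentable $\infty$-categories asserts that such a right adjoint automatically commutes with the corresponding localization functors, and the lemma follows.

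Verifying (i) is immediate from the construction: $\mu^*$ is right adjoint to the Day-convolution extension $\mu: \PSh_\Sigma(\Cor^\fr(S)) \to \PSh_\Sigma(\scr C)$, and $U = \gamma_*\mu^*$ is right adjoint to $\mu\gamma^*$. For (ii), the definitions given in the paragraph preceding the lemma make it tautological that $U$ preserves and reflects locality; since $U = \gamma_*\mu^*$ and, by the same mechanism, locality in $\PSh_\Sigma(\Cor^\fr(S))$ is detected by $\gamma_*$, the same conclusion holds for $\mu^*$.

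The only substantive input required by the Bousfield-localization principle is that the left adjoints send the generating equivalences on the source to local equivalences on the target. For the $\A^1$ case, a projection $(X \times \A^1)_+ \to X_+$ is sent by $\mu\gamma^*$ to $\mu_+(X \times \A^1) \to \mu_+(X)$, which is an $\A^1$-equivalence in $\PSh_\Sigma(\scr C)$ by the very definition of $\A^1$-locality there (it becomes an equivalence after applying $U$). For the Nisnevich case, a sieve inclusion $R \hookrightarrow X$ associated to a Nisnevich cover is sent by $\mu\gamma^*$ to $\mu_+(R) \to \mu_+(X)$, and this map is a Nisnevich equivalence in $\PSh_\Sigma(\scr C)$ precisely by the \emph{compatibility with the Nisnevich topology} hypothesis built into the definition of a motivic category of correspondences. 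The motivic case follows by combining the two.

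I do not expect a genuine obstacle: the statement is essentially a repackaging of the definitions, with the Nisnevich-compatibility axiom as the one nontrivial ingredient. The only care needed is to check that the abstract Bousfield-localization machinery applies in the intended form for each of the three localizations in question.
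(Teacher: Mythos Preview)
There is a gap in your argument: the ``standard fact'' you invoke is not actually a fact. A right adjoint $G$ that preserves and reflects local objects does \emph{not} automatically commute with the localization; one must additionally verify that $G$ preserves local \emph{equivalences}. By adjunction, the condition you check in your third paragraph---that the left adjoint $\mu\gamma^*$ sends generating equivalences to local equivalences in $\PSh_\Sigma(\scr C)$---is precisely equivalent to $U$ preserving local objects, i.e.\ to your condition (ii). Since locality in $\PSh_\Sigma(\scr C)$ is \emph{defined} so that $U$ detects it, this is tautological and requires neither the compatibility axiom nor any argument about $\A^1$-homotopies. In effect you are verifying (ii) twice and invoking the axiom where it is not needed.

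What the paper actually checks is the missing condition: that $U$ preserves local equivalences. Since local objects in $\PSh_\Sigma(\scr C)$ are by definition those $F$ with $UF$ local, the generating local equivalences in $\PSh_\Sigma(\scr C)$ are the maps $\mu\gamma^*(\alpha)$ for $\alpha$ a generating equivalence in $\PSh_\Sigma(\Sm_S)_*$. One must therefore show that $U\mu\gamma^*(\alpha) = h^{\scr C}(\alpha)$ is again a local equivalence downstairs. For $\A^1$-projections this is formal (they are $\A^1$-homotopy equivalences, preserved by any functor); for Nisnevich covering sieves this is \emph{exactly} the compatibility axiom. Note the direction: the axiom is not used to see that $\mu_+(R) \to \mu_+(X)$ is a Nisnevich equivalence in $\PSh_\Sigma(\scr C)$ (that is automatic), but rather that its image $h^{\scr C}_+(R) \to h^{\scr C}_+(X)$ under $U$ is one in $\PSh_\Sigma(\Sm_S)_*$. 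There is also a further, non-formal step---passing from generating equivalences to all local equivalences---which is delicate because $U$ is a right adjoint; the paper handles this via \cite[Lemma 2.10]{bachmann-norms}, using that $U$ preserves sifted colimits.
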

\begin{proof}
By construction, the forgetful functors preserve and detect local objects and equivalences; it hence suffices to show that they preserve weak equivalences.
It is enough to  prove the second claim, since $\gamma_*$ preserves and detects weak equivalences by \cite[Proposition 3.2.14]{EHKSY}.
Using \cite[Lemma 2.10]{bachmann-norms}, we reduce to proving that $h^{\scr C}$ preserves $\A^1$-homotopy equivalences and generating Nisnevich equivalences.
The first case is formal (see e.g. the proof of \cite[Lemma 2.3.20]{EHKSY}) and the second case follows from our assumption of compatibility with the topology, since if $R \hookrightarrow X$ is the Nisnevich sieve generated by a covering $\{U_i \to X\}$ then $L_\Sigma R$ is the sieve generated by $\coprod_i U_i \to X$.
\end{proof}

\begin{lemma} \label{lemm:C-semiadd}
The category $\PSh_\Sigma(\scr C)$ is semiadditive and $\mu^*$ preserves all colimits.
\end{lemma}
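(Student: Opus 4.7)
The plan is to first establish semiadditivity of $\PSh_\Sigma(\scr C)$, and then to deduce that $\mu^*$ preserves all small colimits essentially formally, from adjointness together with the pointwise nature of sifted colimits in $\PSh_\Sigma$.

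For the first claim, recall from Definition \ref{def:mot-correspondences}(1) that $\scr C$ is already semiadditive. The Yoneda embedding $y : \scr C \hookrightarrow \PSh_\Sigma(\scr C)$ preserves finite coproducts by the universal property of the nonabelian derived category \cite[Proposition 5.5.8.10]{lurie-htt}, and it preserves finite products because the composite into $\PSh(\scr C)$ does and the inclusion $\PSh_\Sigma(\scr C) \hookrightarrow \PSh(\scr C)$ is closed under limits. Consequently the canonical comparison map from coproduct to product is an equivalence on representables. Since in $\PSh_\Sigma(\scr C)$ both finite products and sifted colimits are computed pointwise in $\Spc$ (where finite products commute with sifted colimits by the very definition of siftedness), while binary coproducts always commute with colimits in each variable, writing arbitrary $F, G \in \PSh_\Sigma(\scr C)$ as sifted colimits of representables extends the equivalence $F \amalg G \wequi F \times G$ from representables to all of $\PSh_\Sigma(\scr C)$, yielding semiadditivity.

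For the second claim, observe that $\mu^*$ is restriction along $\mu$. Sifted colimits in $\PSh_\Sigma$ of a category with finite coproducts are computed pointwise, so $\mu^*$ preserves sifted colimits tautologically. Moreover $\mu^*$ is a right adjoint (to the left Kan extension $\mu_! = \mu$), and hence preserves finite products. Combined with the semiadditivity of both source and target established above, this forces $\mu^*$ to preserve finite coproducts as well. Since every small colimit is built from sifted colimits and finite coproducts, $\mu^*$ preserves all small colimits.

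The only step requiring any actual argument is the semiadditivity of $\PSh_\Sigma(\scr C)$; once that is in hand, the colimit preservation of $\mu^*$ is a formal consequence of adjointness and the pointwise calculation of sifted colimits, so I do not anticipate any substantive obstacles.
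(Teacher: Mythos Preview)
Your proof is correct and follows essentially the same route as the paper. The paper cites \cite[Corollary 2.4]{gepner2016universality} for the semiadditivity of $\PSh_\Sigma(\scr C)$, whereas you spell out that argument directly (bootstrapping from representables via sifted colimits); the deduction that $\mu^*$ preserves all colimits (sifted colimits pointwise, finite products by adjointness, hence finite coproducts by semiadditivity, hence everything) is identical in both.
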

\begin{proof}
Since $\scr C$ is semiadditive by assumption, the same holds for $\PSh_\Sigma(\scr C)$ by \cite[Corollary 2.4]{gepner2016universality}.
The functor $\mu^*$ preserves sifted colimits essentially by construction \cite[Proposition 5.5.8.10(4)]{lurie-htt}, and finite products being a right adjoint.
Hence it preserves finite coproducts since source and target are semiadditive, and thus all colimits by \cite[Lemma 2.8]{bachmann-norms}.
\end{proof}

We write $\Spc^\scr{C}$ for the motivic localization of $\PSh_\Sigma(\scr C)$.
\begin{proposition}
\begin{enumerate}
\item $\Spc^\scr{C}$ is presentably symmetric monoidal and compactly generated under sifted colimits by the images of smooth schemes.
\item $\Spc^\scr{C}$ is semiadditive.
\item There is an induced adjunction \[ \mu: \Spc^\fr(S) \adj \Spc^\scr{C}: \mu^* \] with $\mu$ symmetric monoidal and $\mu^*$ cocontinuous and conservative.
\end{enumerate}
\end{proposition}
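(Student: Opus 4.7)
The strategy is to transfer structure from $\PSh_\Sigma(\scr C)$ to its motivic localization $\Spc^\scr{C}$, using that the latter is a Bousfield localization at a small set of morphisms. For part (1), Day convolution already makes $\PSh_\Sigma(\scr C)$ presentably symmetric monoidal. To descend this to $\Spc^\scr{C}$, one checks that the class of motivic equivalences is stable under tensoring with representables $\mu(Y)$ for $Y \in \Sm_S$: tensoring an $\A^1$-projection $\mu(\A^1 \times Z) \to \mu(Z)$ with $\mu(Y)$ gives $\mu(\A^1 \times Z \times Y) \to \mu(Z \times Y)$ by symmetric monoidality of $\mu$, again an $\A^1$-projection; and for a Nisnevich covering sieve $R \hookrightarrow Z$, the product $R \times Y \hookrightarrow Z \times Y$ is again a Nisnevich covering sieve, whose image under $h^\scr{C}_+$ is a Nisnevich equivalence by the compatibility hypothesis on $\mu$. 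Compact generation under sifted colimits by smooth schemes then follows from essential surjectivity of $\mu$, since the representables $\mu(X)$ for $X \in \Sm_S$ already generate $\PSh_\Sigma(\scr C)$ under sifted colimits and localization preserves generators. Part (2) is similar: $\PSh_\Sigma(\scr C)$ is semiadditive by Lemma \ref{lemm:C-semiadd}, and since Nisnevich-local and $\A^1$-local presheaves are stable under finite products, the reflective subcategory $\Spc^\scr{C}$ inherits semiadditivity.

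For part (3), Lemma \ref{lemm:C-mot-loc} shows that $\mu^*$ commutes with $L_\mot$, so the $\PSh_\Sigma$-level adjunction $\mu \dashv \mu^*$ descends to an adjunction on motivic localizations. Symmetric monoidality of the $\PSh_\Sigma$-level $\mu$ comes from the universal property of Day convolution applied to the symmetric monoidal functor $\mu: \Cor^\fr(S) \to \scr C$, and it persists after the compatible localization established in part (1). Cocontinuity of the descended $\mu^*$ follows from cocontinuity at the $\PSh_\Sigma$-level (Lemma \ref{lemm:C-semiadd}) together with commutation with $L_\mot$. Conservativity is a two-step argument: the $\PSh_\Sigma$-level $\mu^*$ is conservative by essential surjectivity of $\mu$ (since it is given by precomposition with $\mu$, and every object of $\scr C$ is equivalent to some $\mu(X)$), and any morphism in $\Spc^\scr{C}$ that becomes an equivalence after applying $\mu^*$ is thus already an equivalence of underlying presheaves, hence an equivalence in $\Spc^\scr{C}$.

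The main technical point is checking that motivic localization is compatible with the Day convolution symmetric monoidal structure on $\PSh_\Sigma(\scr C)$; modulo this, the remaining statements follow from routine transport through the localization together with the results recorded in Lemmas \ref{lemm:C-mot-loc} and \ref{lemm:C-semiadd}.
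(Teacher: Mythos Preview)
Your proposal is correct and follows essentially the same approach as the paper, which simply records that the result is immediate from Lemmas~\ref{lemm:C-mot-loc} and~\ref{lemm:C-semiadd} and defers the details to \cite[proofs of Propositions 3.2.10 and 3.2.15]{EHKSY}; you have spelled out those details. One small redundancy: in your Nisnevich case you invoke the compatibility hypothesis directly, but once you observe that the generating motivic equivalences in $\PSh_\Sigma(\scr C)$ are exactly the $\mu_+$-images of generating motivic equivalences in $\PSh_\Sigma(\Sm_S)$ (by the adjunction characterizing local objects), the closure under tensoring with $\mu_+(Y)$ follows immediately from symmetric monoidality of $\mu_+$ without re-appealing to compatibility.
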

\begin{proof}
Immediate from Lemmas \ref{lemm:C-mot-loc} and \ref{lemm:C-semiadd}, see e.g. \cite[proofs of Propositions 3.2.10 and 3.2.15]{EHKSY}.
\end{proof}

\subsubsection{}
Being semiadditive, $\Spc^\scr{C}$ has a subcategory of grouplike objects which we denote by $\Spc^{\scr{C}\gp} \subset \Spc^{\scr C}$.
We write \[ (\ph)^\gp: \Spc^{\scr C} \to \Spc^{\scr{C}\gp} \] for the reflection.
Note that this is a symmetric monoidal localization.
Note also that $F \in \Spc^\scr{C} \subset \PSh_\Sigma(\scr C)$ is grouplike if and only if $F(X)$ is grouplike for every $X \in \Sm_S$.
Note finally that a commutative monoid $M$ is grouplike if and only if the shearing map $M \times M \to M \times M$ is an equivalence; this condition is clearly stable under limits and sifted colimits, and hence under arbitrary colimits of monoids.\todo{really?}

\subsubsection{}
Put $\SHSC = \Spc^{\scr C}[(S^1)^{-1}]$ and $\SH^\scr{C} = \Spc^{\scr C}[(\P^1)^{-1}]$.
(Here for a presentably symmetric monoidal $\infty$-category $\scr C$ and an object $X \in \scr C$, we denote by $\scr C[X^{-1}]$ the initial presentably symmetric monoidal $\infty$-category under $\scr C$ in which $X$ becomes invertible \cite[\S2.1]{robalo}.)
We thus get a commutative diagram of left adjoints
\begin{equation*}
\begin{tikzcd}
\Spc(S)_* \ar[r, "\gamma^*"] \ar[d, "\Sigma^\infty_{S^1}"] & \Spc^\fr(S) \ar[r, "\mu"] \ar[d, "\Sigma^\infty_{S^1}"] & \Spc^\scr{C} \ar[d, "\Sigma^\infty_{S^1}"] \ar[dd, "\Sigma^\infty", bend left=50] \\
\SHS(S) \ar[r, "\gamma^*"] \ar[d, "\sigma^\infty"] & \SHSfr(S) \ar[r, "\mu"] \ar[d, "\sigma^\infty"] & \SHSC \ar[d, "\sigma^\infty"] \\
\SH(S) \ar[r, "\gamma^*"]  & \SH^\fr(S) \ar[r, "\mu"]  & \SH^\scr{C}.
\end{tikzcd}
\end{equation*}
The right adjoints of $\Sigma^\infty, \Sigma^\infty_{S^1}, \sigma^\infty$ are respectively denoted $\Omega^\infty, \Omega^\infty_{S^1}, \omega^\infty$.
Recall that $\gamma^*: \SH(S) \to \SH^\fr(S)$ is an equivalence \cite[Theorem 18]{hoyois2018localization}.

\subsection{The case of perfect fields}
From now on we assume that $S = Spec(k)$ is the spectrum of a perfect field.
\begin{proposition} \label{prop:mot-loc-perf}
Let $F \in \PSh_\Sigma(\scr C)^\gp$.
Then the canonical map \[ L_{\A^1} F \to L_\mot F \] induces an equivalence on sections over essentially smooth\footnote{Recall our conventions regarding essentially smooth schemes from \S\ref{subsec:notation}.}, semilocal $k$-schemes.
In particular \[ L_\mot F \wequi L_\Nis L_{\A^1} F \] and \[ UL_\mot F \wequi L_\Zar L_{\A^1} U F. \]
\end{proposition}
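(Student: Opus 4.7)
The plan is to reduce the proposition to the analogous statement for tangentially framed correspondences, which is established in \cite{EHKSY} as a corollary of the strict $\A^1$-invariance theorem of Garkusha--Panin--Druzhinin over a perfect field. The argument here then amounts to transporting that result through the symmetric monoidal functor $\mu$.

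First I would pass everything through the forgetful functors. By Lemma \ref{lemm:C-mot-loc}, the functor $\mu^*: \PSh_\Sigma(\scr C) \to \PSh_\Sigma(\Cor^\fr(k))$ commutes with $L_{\A^1}$, $L_\Nis$ and $L_\mot$, and by \cite[Proposition 3.2.14]{EHKSY} the same holds for $\gamma_*: \PSh_\Sigma(\Cor^\fr(k)) \to \PSh_\Sigma(\Sm_k)_*$. Both functors are computed sectionwise, so they preserve grouplikeness and detect equivalences. Writing $G := \mu^* F \in \PSh_\Sigma(\Cor^\fr(k))^\gp$ and $H := L_{\A^1} G$, it thus suffices to prove that on essentially smooth semi-local $k$-schemes the canonical map $H \to L_\mot H$ is an equivalence and that $L_\mot H \simeq L_\Nis H$, $\gamma_* L_\mot H \simeq L_\Zar \gamma_* H$.

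Next I would invoke the framed-transfer input. Since $L_{\A^1}$ is a geometric realization of the Suslin complex, which is a sifted colimit, and grouplike commutative monoids are closed under sifted colimits (the shearing condition is), $H$ is again grouplike. For such $H$---a grouplike, $\A^1$-invariant object of $\PSh_\Sigma(\Cor^\fr(k))$ over a perfect field---the strict homotopy invariance theorem available in \cite{EHKSY} asserts exactly that $L_\Nis H$ is still $\A^1$-invariant (so $L_\mot H \simeq L_\Nis H$), that $H$ and $L_\Nis H$ have the same sections on essentially smooth semi-local $k$-schemes, and that the Nisnevich and Zariski sheafifications of $\gamma_* H$ agree on $\Sm_k$. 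Together these give all three assertions for $H$ and hence, by conservativity of $\mu^*$ and $\gamma_*$, for $F$.

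The main obstacle is therefore the strict $\A^1$-invariance input itself, which is imported wholesale from \cite{EHKSY}; the rest is the routine check that the chain of adjoints $\mu \dashv \mu^*$, $\gamma^* \dashv \gamma_*$ transports the statement cleanly, together with the two essentially formal observations that $L_{\A^1}$ preserves grouplikeness and commutes with the forgetful functors.
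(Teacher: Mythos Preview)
Your proposal is correct and follows essentially the same route as the paper: reduce via $\mu^*$ (using Lemma \ref{lemm:C-mot-loc}) to the case $\scr C = \Cor^\fr(k)$ and then invoke the strict $\A^1$-invariance result \cite[Theorem 3.4.11]{EHKSY}. The paper's proof is a one-line citation of exactly this reduction, whereas you spell out in more detail what is being imported from \cite{EHKSY} and why grouplikeness is preserved along the way.
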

\begin{proof}
Since $\mu^*$ commutes with $L_\Nis, L_{\A^1}, L_\mot$ (Lemma \ref{lemm:C-mot-loc}), we are reduced to the case $\scr C = \Cor^\fr(k)$, which is treated in \cite[Theorem 3.4.11]{EHKSY}.
\end{proof}

\begin{proposition} \label{prop:Omega-Gm-mot-loc}
Let $F \in \PSh_\Sigma(\scr C)^\gp$.
Then the canonical map \[ L_\mot \Omega_\Gm F \to \Omega_\Gm L_\mot F \] is an equivalence.
\end{proposition}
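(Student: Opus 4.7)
The plan is to apply Proposition~\ref{prop:mot-loc-perf} to both $F$ and $\Omega_\Gm F$, thereby reducing the problem to showing that $\Omega_\Gm$ commutes separately with $L_\Nis$ and $L_{\A^1}$ on grouplike presheaves. First observe that $\Omega_\Gm F$ is again grouplike: sectionwise, $\Omega_\Gm F(X) = \mathrm{fib}(F(X \times \Gm) \to F(X))$ is a fiber of grouplike $E_\infty$-monoids, and fibers of grouplike commutative monoids are grouplike. Proposition~\ref{prop:mot-loc-perf} then yields $L_\mot F \wequi L_\Nis L_{\A^1} F$ and $L_\mot \Omega_\Gm F \wequi L_\Nis L_{\A^1} \Omega_\Gm F$, so the canonical comparison factors as
\[ L_\Nis L_{\A^1} \Omega_\Gm F \to L_\Nis \Omega_\Gm L_{\A^1} F \to \Omega_\Gm L_\Nis L_{\A^1} F. \]

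The second map expresses the commutation of $\Omega_\Gm$ with $L_\Nis$, which is straightforward: Nisnevich sheafification is a left exact localization, hence preserves the defining fiber sequence of $\Omega_\Gm$. Moreover, $\Omega_\Gm L_{\A^1} F$ is automatically $\A^1$-local, since $\A^1$-locality is a sectionwise condition preserved by taking fibers (the projection $X \times \A^1 \to X$ induces an equivalence on $L_{\A^1} F$, which persists after restricting along $\ph \times \Gm$ and forming fibers). Therefore, the first map is an equivalence as soon as $\Omega_\Gm$ preserves $\A^1$-equivalences of grouplike presheaves.

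This last claim is the main obstacle. The intended argument is to model $L_{\A^1} F$ on grouplike presheaves by the Suslin complex $\mathrm{Sing}^{\A^1}(F)(X) = |F(X \times \Delta^\bullet_k)|$ and to exploit that $\Omega_\Gm$ commutes with this geometric realization. The grouplike hypothesis enters in an essential way at both steps: the identification $L_{\A^1} F \simeq \mathrm{Sing}^{\A^1}(F)$ is specific to connective-spectrum-valued presheaves (where an $\A^1$-homotopy yields, via the diagonal trick, that $\mathrm{Sing}^{\A^1}(F)$ is $\A^1$-local), and fibers commute with geometric realizations only in stable (or grouplike) contexts. As an alternative, one may note that $\mu^*: \PSh_\Sigma(\scr C) \to \PSh_\Sigma(\Cor^\fr(k))$ is conservative and commutes with $L_\mot$ (Lemma~\ref{lemm:C-mot-loc}), with $\Omega_\Gm$ (both functors are computed sectionwise on underlying presheaves of spaces), and with the grouplike condition; this reduces the proposition to the case $\scr C = \Cor^\fr(k)$, where it is essentially \cite[Theorem 3.4.11]{EHKSY}.
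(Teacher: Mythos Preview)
Your main argument contains a genuine gap in the step where you claim that the commutation of $\Omega_\Gm$ with $L_\Nis$ is ``straightforward'' because Nisnevich sheafification is left exact. Write $\Omega_\Gm F = \mathrm{fib}\bigl(a^*F \to F\bigr)$, where $a^*F(X) = F(X \times \Gm)$. Left exactness of $L_\Nis$ only gives $L_\Nis \Omega_\Gm F \wequi \mathrm{fib}\bigl(L_\Nis(a^*F) \to L_\Nis F\bigr)$; to reach $\Omega_\Gm L_\Nis F$ you still need $L_\Nis(a^*F) \wequi a^*(L_\Nis F)$, and this is \emph{false} for general $F$. Concretely, on a henselian local scheme $X$ the left side evaluates to $F(X \times \Gm)$ while the right side evaluates to $(L_\Nis F)(X \times \Gm)$, and $X \times \Gm$ is not henselian local, so sheafification can genuinely change the value. (Equivalently: the functor $X \mapsto X \times \Gm$ on $\Sm_k$ is continuous for the Nisnevich topology but not cocontinuous, so precomposition does not commute with sheafification.) This is exactly the difficulty the paper's proof confronts: after using that $\Omega_\Gm$ commutes with $L_{\A^1}$ (which you handle correctly), one must show $L_\Zar \Omega_\Gm UF \to \Omega_\Gm L_\Zar UF$ is an equivalence for $F$ grouplike and $\A^1$-local, and this requires the non-formal input that the homotopy sheaves are strictly $\A^1$-invariant (hence unramified), a reduction to generic points, and a cohomological dimension argument on $\P^1_K$ together with the fact that $\pi_j F$ restricted to $\A^1_K$ is already a Zariski sheaf.

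Your alternative route---reducing along $\mu^*$ to the framed case---is a legitimate reduction (indeed $\mu^*$ is conservative and commutes with $L_\mot$, with limits, and with the evaluation $a^*$), but the citation is off: \cite[Theorem~3.4.11]{EHKSY} is the statement $L_\mot \wequi L_\Nis L_{\A^1}$ on grouplike framed presheaves, not the commutation of $\Omega_\Gm$ with $L_\mot$. Even after reducing to $\scr C = \Cor^\fr(k)$ you still owe the argument above (which uses \cite[Corollary~3.4.13]{EHKSY} on strict $\A^1$-invariance as its essential input). So the reduction does not bypass the work; it only relocates it.
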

\begin{proof}
$\Omega_\Gm$ preserves grouplike objects and commutes with $L_{\A^1}$ (see e.g. \cite[Lemma 4]{bachmann-grassmann}).
It thus suffices (using Proposition \ref{prop:mot-loc-perf}) to prove that if $F$ is grouplike and $\A^1$-local, then $L_\Zar \Omega_\Gm UF \to \Omega_\Gm L_\Zar UF$ is an equivalence.
Using hypercompleteness (see e.g. \cite[Proposition A.3]{bachmann-norms}), we may check this on homotopy sheaves.
Since homotopy sheaves are strictly $\A^1$-invariant \cite[Corollary 3.4.13]{EHKSY} and hence unramified \cite[Definition 2.1]{A1-alg-top}\cite[Lemma 6.4.4]{morel2005stable}, we need only check that we have an isomorphism on generic stalks, i.e. finitely generated fields $K/k$.
It thus suffices to show that \[ H^i_\Zar(\Spec(K)_+ \wedge \Gm, \pi_j F) \wequi \begin{cases} \pi_j(F)(\Spec(K)_+ \wedge \Gm) & i=0 \\ 0 & \text{else} \end{cases}. \]
Here by $\pi_j F$ we mean the homotopy \emph{presheaf} of $F$.
Since $\Sigma \Spec(K)_+ \wedge \Gm \stackrel{\A^1}{\wequi} \P^1 \wedge \Spec(K)_+$ and $L_\Zar \pi_j F$ is motivically local (by Proposition \ref{prop:mot-loc-perf}), we have \[ H^i_\Zar(\Spec(K)_+ \wedge \Gm, \pi_j F) \wequi H^{i+1}_\Zar(\P^1 \wedge \Spec(K)_+,  \pi_j F) = 0 \text{ for } i>0, \] for cohomological dimension reasons.
It remains to prove the first isomorphism, for which it suffices to show that the restriction of $\pi_j F$ to $\A^1_K$ is already a Zariski sheaf.
If $K$ is infinite we can, arguing as in \cite[proof of Theorem 3.4.11]{EHKSY}, refer to \cite[Theorem 2.15(2)]{garkusha2015homotopy} for this.

The theorem is thus proved if $k$ is infinite.
Since $L_\mot$ and $\Omega_\Gm$ commute with essentially smooth base change, we may reduce to this case using \cite[Corollary B.2.5]{EHKSY}.
\end{proof}
\begin{corollary} \label{cor:Omega-Gm-colimits}
The functor $\Omega_\Gm: \Spc^{\scr C\gp} \to \Spc^{\scr C\gp}$ preserves colimits.
\end{corollary}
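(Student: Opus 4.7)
The plan is to reduce preservation of colimits by $\Omega_\Gm$ in $\Spc^{\scr C\gp}$ to the corresponding statement in $\PSh_\Sigma(\scr C)$, the formal bridge being Proposition \ref{prop:Omega-Gm-mot-loc}. Given a diagram $F_\bullet: I \to \Spc^{\scr C\gp}$, grouplike objects are stable under arbitrary colimits in the semiadditive category $\PSh_\Sigma(\scr C)$ (by the shearing characterization noted in \S1), so the colimit in $\Spc^{\scr C\gp}$ agrees with $L_\mot \colim^{\PSh_\Sigma(\scr C)} F_\bullet$, and this value is grouplike. Applying $\Omega_\Gm$ and invoking Proposition \ref{prop:Omega-Gm-mot-loc} then yields
\[ \Omega_\Gm \colim{}^{\Spc^{\scr C\gp}} F_\bullet \wequi \Omega_\Gm L_\mot \colim{}^{\PSh_\Sigma(\scr C)} F_\bullet \wequi L_\mot \Omega_\Gm \colim{}^{\PSh_\Sigma(\scr C)} F_\bullet. \]

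It remains to show that $\Omega_\Gm$ preserves small colimits on $\PSh_\Sigma(\scr C)$; combined with the display above this gives $L_\mot \colim^{\PSh_\Sigma(\scr C)} \Omega_\Gm F_\bullet \wequi \colim^{\Spc^{\scr C\gp}} \Omega_\Gm F_\bullet$, where the final step uses that each $\Omega_\Gm F_i$ remains grouplike (as the fiber of a map of grouplike monoids) and motivically local (again by Proposition \ref{prop:Omega-Gm-mot-loc}). For this auxiliary claim I would invoke the standard fact that a functor between cocomplete $\infty$-categories preserves all small colimits iff it preserves sifted colimits and finite coproducts. Sifted colimits are computed sectionwise in $\PSh_\Sigma(\scr C)$, and $\Omega_\Gm F(X)$ is a sectionwise finite limit of values of $F$, which commutes with sifted colimits in $\Spc$; finite coproducts equal finite products by the semiadditivity of $\PSh_\Sigma(\scr C)$, and $\Omega_\Gm = \iMap(\Gm, -)$ preserves products as a right adjoint.

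No essential obstacle arises beyond Proposition \ref{prop:Omega-Gm-mot-loc}, already established; everything else is routine bookkeeping about the location of colimits and the closure of grouplike objects under the relevant operations.
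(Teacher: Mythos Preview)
Your overall strategy matches the paper's: reduce via Proposition~\ref{prop:Omega-Gm-mot-loc} to the presheaf level, then treat sifted colimits and finite coproducts separately using semiadditivity. There is, however, a gap in the sifted-colimit step. You write that ``$\Omega_\Gm F(X)$ is a sectionwise finite limit of values of $F$, which commutes with sifted colimits in $\Spc$,'' but this is false: finite \emph{products} commute with sifted colimits in $\Spc$, while general finite limits do not---for instance, geometric realization does not commute with taking fibers. Since $\Omega_\Gm F(X)$ is the fiber of $F(X \times \Gm) \to F(X)$, a genuine pullback rather than a product, the appeal to siftedness alone is insufficient. In particular your stronger claim that $\Omega_\Gm$ preserves all small colimits on $\PSh_\Sigma(\scr C)$ is not established by this argument.

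The fix is to work in $\PSh_\Sigma(\scr C)^\gp$, which suffices since your diagram and its colimit are already grouplike. For grouplike $F$ the restriction $F(X \times \Gm) \to F(X)$ is split by pullback along the projection, so $\Omega_\Gm F$ is naturally a retract of $\Omega_{\Gm_+} F$; the latter is given sectionwise by $X \mapsto F(X \times \Gm)$ and hence visibly commutes with sectionwise sifted colimits. The comparison map for $\Omega_\Gm$ is then a retract of an equivalence, hence an equivalence. This is precisely how the paper argues.
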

\begin{proof}
The functor preserves finite products, whence by semiadditivity we need only prove that it preserves sifted colimits \cite[Lemma 2.8]{bachmann-norms}.
Via Proposition \ref{prop:Omega-Gm-mot-loc}, we are reduced to proving that $\Omega_\Gm$ commutes with sifted colimits on $\PSh_\Sigma(\scr C)^\gp$.
Since sifted colimits in this category are computed sectionwise, this is clear for $\Omega_{\Gm_+}$.
Since colimits are stable under retracts\NB{which are themselves colimits...}, the result follows.
\end{proof}

\subsection{Cancellation}
We still assume that $S$ is the spectrum of a perfect field.
\begin{lemma} \label{lemm:S1-cancel}
The transformation $\id \to \Omega\Sigma$ of endofunctors of $\Spc^{\scr C\gp}$ is an equivalence.
\end{lemma}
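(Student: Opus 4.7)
The plan is to reduce the claim to the classical Segal recognition principle, following the template already established by Proposition \ref{prop:Omega-Gm-mot-loc} and Corollary \ref{cor:Omega-Gm-colimits} in the $\Gm$-case.

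First I would prove the $S^1$-analog of Corollary \ref{cor:Omega-Gm-colimits}, namely that $\Omega$ preserves sifted colimits on $\Spc^{\scr C\gp}$. The corresponding $S^1$-analog of Proposition \ref{prop:Omega-Gm-mot-loc} --- that $L_\mot \Omega F \wequi \Omega L_\mot F$ for grouplike $F$ --- should go through by the same argument: use hypercompleteness to reduce to a claim on homotopy sheaves, and then exploit strict $\A^1$-invariance, unramifiedness, and a Zariski cohomological dimension calculation (on $S^1$ now in place of $\Gm$) to conclude. Since $\Sigma$ is a left adjoint (hence cocontinuous) and $\Omega$ preserves sifted colimits on $\Spc^{\scr C\gp}$, both endofunctors $\id$ and $\Omega\Sigma$ preserve sifted colimits.

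Second, since $\Spc^{\scr C\gp}$ is generated under sifted colimits by objects of the form $L_\mot(\mu_+(X))^\gp$ for $X \in \Sm_k$, it suffices to verify that the unit $F \to \Omega\Sigma F$ is an equivalence on such generators. Working sectionwise, I would exploit the fact that the forgetful functor $U : \Spc^{\scr C} \to \PSh_\Sigma(\Sm_k)_*$ is conservative, commutes with $L_\mot$ (Lemma \ref{lemm:C-mot-loc}), and commutes with $\Omega$ (being a right adjoint). By Proposition \ref{prop:mot-loc-perf}, evaluation on an essentially smooth semilocal $Y \in \EssSm_k$ then reduces the assertion to the statement that for a grouplike $E_\infty$-space $M$ one has $M \wequi \Omega\Sigma M$ in $\Spc$, which is the classical Segal recognition principle.

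The main obstacle lies in the third step: the suspension $\Sigma$ in $\Spc^{\scr C\gp}$ is an interaction of group completion and motivic localization applied to the naive suspension of a semiadditive pushout, and one must carefully verify that, after evaluation on sections over essentially smooth semilocal schemes (where Proposition \ref{prop:mot-loc-perf} simplifies matters substantially), this corresponds precisely to the classical topological suspension of grouplike $E_\infty$-spaces, so that Segal recognition may be invoked.
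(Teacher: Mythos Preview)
Your approach is workable in spirit but takes a much harder route than the paper, and your sketch of step~1 has a gap.

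The paper's proof is two lines: since $\mu^*: \Spc^{\scr C\gp} \to \Spc^{\fr,\gp}(k)$ preserves limits (right adjoint) and colimits (Lemma~\ref{lemm:C-semiadd}), it commutes with both $\Omega$ and $\Sigma$; being also conservative, it reduces the statement to the case $\scr C = \Cor^\fr$, which is \cite[Corollary~3.5.6]{EHKSY}. This is the intended use of the formalism: structural facts about $\Spc^{\scr C}$ are pulled back from the framed case via $\mu^*$. You never invoke $\mu^*$ at all, and instead try to re-prove the framed result from scratch inside $\Spc^{\scr C\gp}$.

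The specific gap is in your ``$S^1$-analog of Proposition~\ref{prop:Omega-Gm-mot-loc}''. The proof of that proposition is not formal: it computes Zariski cohomology of the scheme $\Spec(K)_+ \wedge \Gm$ and uses that $\Sigma(\Spec(K)_+ \wedge \Gm) \stackrel{\A^1}{\wequi} \P^1_K$ to get a dimension bound. There is no scheme ``$S^1$'' to run this on, so the sentence ``a Zariski cohomological dimension calculation (on $S^1$ now in place of $\Gm$)'' does not type-check. One can prove that $L_\mot$ commutes with $\Omega$ on grouplike objects by other means --- e.g.\ $L_\Nis$ is left exact, and on presheaves of connective spectra one can argue that $\Omega$ commutes with the sectionwise singular construction $L_{\A^1}$ when the input is $1$-connective --- but that is a different argument from the one you propose, and it is essentially the content of the EHKSY result the paper cites. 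Your honest acknowledgment of the obstacle in step~3 (matching $\Sigma$ in $\Spc^{\scr C\gp}$ with the topological suspension after motivic localization) is exactly where the real work in EHKSY lies; there is no reason to redo it here when the reduction via $\mu^*$ is available.
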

\begin{proof}
Since $\mu^*$ preserves limits (being a right adjoint), and also colimits (by Lemma \ref{lemm:C-semiadd}), it preserves the final object $*$ and thus commutes with $\Omega$ and $\Sigma$.
We may thus reduce to $\scr C = \Cor^\fr$, in which case the result is an immediate consequence of \cite[Corollary 3.5.6]{EHKSY}.
\end{proof}

\begin{definition}
We say that $\scr C$ \emph{satisfies cancellation} if for every $X \in \Sm_k$ the canonical map \[ \mu_+(X)^\gp \to \Omega_\Gm \Sigma_\Gm \mu_+(X)^\gp \in \Spc^{\scr C\gp} \] is an equivalence.
\end{definition}

\begin{lemma} \label{lemm:Gm-cancel-deduction}
Suppose that $\scr C$ satisfies cancellation.
Then the transformations $\id \to \Omega_\Gm\Sigma_\Gm$ and $\id \to \Omega_{\P^1}\Sigma_{\P^1}$ of $\Spc^{\scr C \gp}$ are equivalences.
\end{lemma}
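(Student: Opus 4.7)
The plan is to first establish the $\Gm$ case for all of $\Spc^{\scr{C}\gp}$ by reducing via colimit-preservation to the cancellation hypothesis on the generators $\mu_+(X)^\gp$, and then to deduce the $\P^1$ case using the standard motivic splitting $\P^1 \simeq S^1 \wedge \Gm$ together with Lemma \ref{lemm:S1-cancel}.

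For the first step, both endofunctors $\id$ and $\Omega_\Gm \Sigma_\Gm$ of $\Spc^{\scr{C}\gp}$ preserve colimits: $\Sigma_\Gm$ as a left adjoint, and $\Omega_\Gm$ by Corollary \ref{cor:Omega-Gm-colimits}. Since $\{\mu_+(X)\}_{X \in \Sm_k}$ generates $\Spc^\scr{C}$ under colimits, applying the cocontinuous symmetric monoidal localization $(\ph)^\gp$ shows that $\{\mu_+(X)^\gp\}_{X \in \Sm_k}$ generates $\Spc^{\scr{C}\gp}$ under colimits. A natural transformation of cocontinuous endofunctors between presentable $\infty$-categories that is an equivalence on a set of generators is an equivalence everywhere; by hypothesis, cancellation is exactly the statement that $\id \to \Omega_\Gm \Sigma_\Gm$ is an equivalence on these generators, so the claim follows.

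For the $\P^1$ case, the motivic equivalence $\P^1 \simeq S^1 \wedge \Gm$ in $\Spc(S)_*$ is preserved by the symmetric monoidal cocontinuous composite $\Spc(S)_* \to \Spc^{\scr{C}\gp}$, giving $\Sigma_{\P^1} \wequi \Sigma \circ \Sigma_\Gm$ and, by passage to right adjoints, $\Omega_{\P^1} \wequi \Omega_\Gm \circ \Omega$. The unit of $\Sigma_{\P^1} \dashv \Omega_{\P^1}$ at $F \in \Spc^{\scr{C}\gp}$ then decomposes as the composite
\[ F \to \Omega_\Gm \Sigma_\Gm F \to \Omega_\Gm \Omega \Sigma \Sigma_\Gm F, \]
in which the first arrow is an equivalence by the previous paragraph, and, since $\Sigma_\Gm F$ is grouplike, the second arrow is an equivalence by Lemma \ref{lemm:S1-cancel} applied to $\Sigma_\Gm F$. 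The main (minor) obstacle is simply bookkeeping, namely verifying that $\Sigma_\Gm$ and $\Omega_\Gm$ on $\Spc^{\scr{C}\gp}$ are compatible with their ambient counterparts on $\Spc^\scr{C}$ (so that the decomposition and the cited lemma apply cleanly), but this is immediate from the fact that grouplike objects are closed under both limits and colimits in $\Spc^\scr{C}$.
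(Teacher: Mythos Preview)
Your proof is correct and follows essentially the same approach as the paper: reduce the $\Gm$ case to generators via colimit-preservation of $\Omega_\Gm$ (Corollary \ref{cor:Omega-Gm-colimits}), then deduce the $\P^1$ case from $\P^1 \wequi S^1 \wedge \Gm$ together with Lemma \ref{lemm:S1-cancel}. The only cosmetic difference is the order in which you factor $\Sigma_{\P^1}$ and $\Omega_{\P^1}$, which is immaterial.
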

\begin{proof}
We first treat $\Omega_\Gm\Sigma_\Gm$.
Since the functor preserves colimits by Corollary \ref{cor:Omega-Gm-colimits}, we need only show that the transformation induces an equivalence on generators, which holds by assumption.
Now we treat $\Omega_{\P^1} \Sigma_{\P^1}$.
Since $\P^1 \wequi S^1 \wedge \Gm$ we get $\Omega_{\P^1} \wequi \Omega_{S^1}\Omega_{\Gm}$ and $\Sigma_{\P^1} \wequi \Sigma_\Gm\Sigma_{S^1}$; thus the claim reduces via lemma \ref{lemm:S1-cancel} to the one about $\Omega_\Gm\Sigma_\Gm$.
\end{proof}

\begin{proposition} \label{prop:cancellation}
If $\scr C$ satisfies cancellation, then the functors \[ \Spc^{\scr C\gp} \to \SHSC \to \SH^{\scr C} \] are fully faithful.
\end{proposition}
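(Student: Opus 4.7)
The plan is to use the standard colimit formulas for mapping spaces in $\SHSC$ and $\SH^\scr{C}$, and to show each colimit is immediately stationary when the inputs come from $\Spc^{\scr C\gp}$, appealing to the cancellation Lemmas \ref{lemm:S1-cancel} and \ref{lemm:Gm-cancel-deduction}.

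First I would treat the functor $\Spc^{\scr C\gp} \to \SHSC$. For $F, G \in \Spc^{\scr C\gp}$, the $S^1$-stabilization yields
\[
\map_{\SHSC}(\Sigma^\infty_{S^1} F, \Sigma^\infty_{S^1} G) \wequi \colim_n \map_{\Spc^\scr{C}}(F, \Omega^n \Sigma^n G).
\]
Since grouplike objects in $\Spc^\scr{C}$ are closed under colimits (as recalled in the excerpt), each $\Sigma^n G$ remains grouplike, so Lemma \ref{lemm:S1-cancel}, applied inductively, identifies $\Omega^n \Sigma^n G$ with $G$. Hence the colimit is constant with value $\map_{\Spc^{\scr C\gp}}(F, G)$, proving fully faithfulness of the first functor.

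For the composition $\Spc^{\scr C\gp} \to \SH^\scr{C}$, I would use that the cyclic permutation of $\Gm^{\wedge 3}$ is trivial in $\SHSC$ (a standard motivic fact), so that inverting $\Gm$ is computed by a plain sequential colimit:
\[
\map_{\SH^\scr{C}}(\Sigma^\infty F, \Sigma^\infty G) \wequi \colim_n \map_{\SHSC}\bigl(\Sigma^\infty_{S^1}(F \wedge \Gm^{\wedge n}), \Sigma^\infty_{S^1}(G \wedge \Gm^{\wedge n})\bigr).
\]
The functor $\Sigma^\infty_{S^1}$ factors through group completion (its right adjoint lands in grouplike objects), so after replacing each factor by its group completion the first step identifies the $n$-th term with $\map_{\Spc^{\scr C\gp}}\bigl((F \wedge \Gm^{\wedge n})^\gp, (G \wedge \Gm^{\wedge n})^\gp\bigr)$. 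Recognizing $(F \wedge \Gm^{\wedge n})^\gp$ as the $n$-fold $\Gm$-suspension computed in $\Spc^{\scr C\gp}$, adjunction rewrites this as $\map_{\Spc^{\scr C\gp}}\bigl(F, \Omega_\Gm^n (\Sigma_\Gm^n G)^\gp\bigr)$, and Lemma \ref{lemm:Gm-cancel-deduction} (applied inductively) collapses this to $\map_{\Spc^{\scr C\gp}}(F, G)$. Hence the colimit is again constant.

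The main technical points are the compatibility of $\Sigma^\infty_{S^1}$ with group completion and the coherent interaction of $\Sigma_\Gm$ with grouplikeness; both are formal consequences of the adjunctions in play. All the genuinely nontrivial input is packaged into Lemma \ref{lemm:Gm-cancel-deduction}, so once these formalities are in place the proposition is essentially automatic.
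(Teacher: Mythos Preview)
Your argument correctly establishes that $\Spc^{\scr C\gp} \to \SHSC$ is fully faithful and that the \emph{composite} $\Spc^{\scr C\gp} \to \SH^{\scr C}$ is fully faithful, but the statement asks for more: the second functor $\sigma^\infty: \SHSC \to \SH^{\scr C}$ itself must be fully faithful. Knowing that $\Sigma^\infty_{S^1}$ and the composite $\Sigma^\infty = \sigma^\infty \Sigma^\infty_{S^1}$ are fully faithful does not formally imply that $\sigma^\infty$ is; it only shows $\sigma^\infty$ is fully faithful on the essential image of $\Sigma^\infty_{S^1}$, which does not generate $\SHSC$ under colimits alone (one also needs desuspensions). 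This is a genuine gap.

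The paper closes this gap with an extra step. From the two fully faithfulness results already in hand one deduces, for $X,Y\in\Spc^{\scr C\gp}$ and any $i\in\Z$ (choosing $N$ with $N,\,N+i\ge 0$),
\[
\Map(\Sigma^i\Sigma^\infty_{S^1}X,\ \omega^\infty\sigma^\infty\Sigma^\infty_{S^1}Y)
\;\wequi\;
\Map(\Sigma^{i+N}X,\ \Sigma^N Y)
\;\wequi\;
\Map(\Sigma^i\Sigma^\infty_{S^1}X,\ \Sigma^\infty_{S^1}Y),
\]
so that $\omega^\infty\sigma^\infty\Sigma^\infty_{S^1}Y\wequi\Sigma^\infty_{S^1}Y$ as objects of $\SHSC$ (now tested against \emph{all} shifts). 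Since $\omega^\infty\sigma^\infty$ preserves colimits and desuspensions and $\SHSC$ is generated under these by the $\Sigma^\infty_{S^1}Y$, one concludes $\omega^\infty\sigma^\infty\wequi\id$. This is exactly the missing ingredient in your proposal.

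Two smaller remarks. First, the colimit formula you invoke for mapping spaces in a stabilization is only valid when the source is compact; the clean way to phrase your argument is to show $G \to \Omega^\infty_T\Sigma^\infty_T G$ is an equivalence by testing on compact generators (this is how the paper packages it). Second, your route through $\SHSC$ inverting $\Gm$ is a legitimate variant of the paper's approach, which instead inverts $\P^1$ directly in $\Spc^{\scr C\gp}$ and appeals to the symmetry of $\P^1$ from \cite[Lemma~3.3.3]{EHKSY}; both reach the same two fully faithfulness statements and both still require the final generation argument above.
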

\begin{proof}
If $\scr D$ is a compactly generated, presentably symmetric monoidal $\infty$-category, $T \in \scr D$ a compact symmetric object, \[ \Sigma^\infty: \scr D \adj \scr D[T^{-1}]: \Omega^\infty \] is the stabilization adjunction, $X \in \scr D$ compact and $E \in \scr D$ arbitrary, then \cite[p. 467]{hoyois2016equivariant} \[ \Map(X, \Omega^\infty \Sigma^\infty E) \wequi \colim_n \Map(\Sigma_T^n X, \Sigma_T^n E). \]
In particular, if $\id \to \Omega_T \Sigma_T$ is an equivalence, then $\scr D \to \scr D[T^{-1}]$ is fully faithful.

We first apply this with $\scr D = \Spc^{\scr C \gp}$ and $T = S^1$.
The assumptions are satisfied by Lemma \ref{lemm:S1-cancel} and hence $\Spc^{\scr C\gp} \to \SHSC$ is fully faithful.
Next we apply this with $\scr D = \Spc^{\scr C \gp}$ and $T = \P^1$.
The assumptions are satisfied by Lemma \ref{lemm:Gm-cancel-deduction} and \cite[Lemma 3.3.3]{EHKSY}.
Now let $X, Y \in \Spc^{\scr C \gp}$ and $N + i \ge 0, N \ge 0$.
Then \begin{gather*} \Map(\sigma^\infty \Sigma^i \Sigma^\infty_{S^1} X, \sigma^\infty \Sigma^\infty_{S^1} Y) \wequi \Map(\Sigma^\infty \Sigma^{i+N} X, \Sigma^\infty \Sigma^N Y) \wequi \Map(\Sigma^{i+N} X, \Sigma^N Y) \\ \wequi \Map(\Sigma^\infty_{S^1} \Sigma^{i+N} X, \Sigma^\infty_{S^1} \Sigma^N Y) \wequi \Map(\Sigma^{i} \Sigma^\infty_{S^1} X, \Sigma^\infty_{S^1} Y), \end{gather*} or in other words \[ \Map(\Sigma^i \Sigma^\infty_{S^1} X, \omega^\infty \sigma^\infty \Sigma^\infty_{S^1} Y) \wequi \Map(\Sigma^i \Sigma^\infty_{S^1} X, \Sigma^\infty_{S^1} Y). \]
Since $\SHSC$ is generated under colimits by objects of the form $\Sigma^i \Sigma^\infty_{S^1} X$ we deduce that $\omega^\infty\sigma^\infty \Sigma^\infty_{S^1} Y \wequi \Sigma^\infty_{S^1} Y$.
Since $\omega^\infty\sigma^\infty$ preserves colimits and desuspensions (by stability and compact generation) and $\SHSC$ is generated under colimits and desuspensions by objects of the form $\Sigma^\infty Y$, we deduce that $\omega^\infty\sigma^\infty \wequi \id$, as needed.
\end{proof}

\begin{lemma} \label{lemm:connectivity} \NB{true over any base}
Let $X \in \Spc^{\scr C}$.
Then $\Sigma X \in \Spc^{\scr C\gp}$.
\end{lemma}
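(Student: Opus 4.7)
The plan is to realize $\Sigma X$ as a sifted colimit and verify grouplikeness sectionwise by identifying it with a bar construction.

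First I would work before motivic localization, in $\PSh_\Sigma(\scr C)$, which is pointed, presentable, and semiadditive (Lemma~\ref{lemm:C-semiadd}). In any such $\infty$-category the standard simplicial model of $S^1 \in \Spc_*$, whose $n$-th level is the pointed set $\{*,1,\dots,n\}$, exhibits $\Sigma X$ as the geometric realization $|X^{\vee\bullet}|$. Since finite coproducts and finite products coincide in the semiadditive setting, this rewrites as $|X^{\times\bullet}|$; tracing the face and degeneracy maps, one finds exactly the bar simplicial object associated to $X$ viewed as a commutative monoid via its canonical semiadditive multiplication.

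Geometric realization is a sifted colimit, and sifted colimits in $\PSh_\Sigma(\scr C)$ are computed sectionwise (they commute with the finite products cutting $\PSh_\Sigma$ out of $\PSh$). Evaluated at any object $Y$, the presheaf $\Sigma X$ therefore becomes the classifying space $B(X(Y))$ of the commutative monoid $X(Y)$ in $\Spc$. Since the bar simplicial object starts with $B_0 = *$, this realization is a connected pointed space, and any connected commutative monoid is grouplike; hence $\Sigma X$ is grouplike in $\PSh_\Sigma(\scr C)$.

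Finally I would pass to $\Spc^{\scr C}$ via $L_\mot$. As a left adjoint between semiadditive presentable $\infty$-categories, $L_\mot$ preserves finite coproducts and hence finite products. Grouplikeness is simply the invertibility of the shearing map $M \times M \to M \times M$, a condition preserved by any functor respecting finite products, so $L_\mot \Sigma X \in \Spc^{\scr C}$ is grouplike as required. The step I expect to require the most care is the identification $\Sigma X \simeq |X^{\times \bullet}|$ in the semiadditive setting, which hinges on the coincidence $\vee = \times$ together with a careful comparison of the face and degeneracy maps of the standard simplicial circle with those of the bar construction; once this is in place, the remaining steps are essentially formal.
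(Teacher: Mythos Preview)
Your argument is correct and is essentially the paper's own proof, only with more detail spelled out. The paper likewise reduces to $\PSh_\Sigma(\scr C)$, observes that suspension there is computed sectionwise as the bar construction $BY$ on the underlying commutative monoid (hence connected, hence grouplike), and then uses that $L_\mot$ preserves grouplike objects to pass back to $\Spc^{\scr C}$; your identification $\Sigma X \simeq |X^{\times\bullet}|$ via $\vee=\times$ and your product-preservation argument for $L_\mot$ simply make these steps explicit.
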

\begin{proof}
Grouplike objects are preserved by $L_\Nis$ and $L_{\A^1}$\NB{details?} and hence $L_\mot$, whence it suffices to prove the analogous claim for $\PSh_\Sigma(\scr C)$.
In this case suspension is computed sectionwise when viewed as taking values in monoids (see the proof of Lemma \ref{lemm:S1-cancel}).
We are reduced to the well-known observation that if $Y \in \CMon(\Spc)$ then $\Sigma Y$ ($=BY$) is grouplike (indeed connected\NB{ref?}).
\end{proof}

\begin{corollary} \label{cor:representing-spectrum}
Assume that $\scr C$ satisfies cancellation.
Consider the adjunction \[ \mu: \SH(k) \wequi \SH^\fr(k) \adj \SH^{\scr C}: \mu^*. \]
Then \[ \mu^*(\1_\scr{C}) \wequi (L_\mot h^{\scr C}_+(*)^\gp, L_\mot h^{\scr C}(\P^1), L_\mot h^{\scr C}((\P^1)^{\wedge 2}), \dots) \in \SH(k) \] is the presentation of $\mu^*(\1_{\scr C})$ as a motivic $\P^1$-$\Omega$-spectrum.
\end{corollary}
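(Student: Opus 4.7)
The plan is a direct computation of the $n$-th space of the $\P^1$-$\Omega$-spectrum presentation of $\mu^*(\1_\scr{C})$, namely $\Omega^\infty(\Sigma_{\P^1}^n\mu^*(\1_\scr{C}))\in\Spc(k)_*$, using the adjunction $\mu\dashv\mu^*$ and the cancellation theorem just proved. Since $\mu$ is symmetric monoidal with $\P^1$ invertible in $\SH^\scr{C}$, the stable $\mu^*$ commutes with $\Sigma_{\P^1}$; and the commutative diagram preceding \S2.2 gives $\Sigma^\infty\circ\mu_{\mathrm{unst}}\simeq\mu_{\mathrm{st}}\circ\Sigma^\infty$, which upon passing to right adjoints yields $\Omega^\infty\circ\mu^*\simeq U\circ\Omega^\infty$, where $U=\gamma_*\mu^*_{\mathrm{unst}}:\Spc^\scr{C}\to\Spc(k)_*$ is the forgetful functor of \S2.1. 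Combining these identifications,
\[ \Omega^\infty(\Sigma_{\P^1}^n\mu^*(\1_\scr{C}))\simeq U\bigl(\Omega^\infty\Sigma^\infty\mu((\P^1)^{\wedge n})\bigr). \]

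By the cancellation theorem (Proposition \ref{prop:cancellation}), $\Sigma^\infty_{\P^1}:\Spc^\scr{C}\to\SH^\scr{C}$ factors through the fully faithful embedding $\Spc^{\scr C\gp}\hookrightarrow\SH^\scr{C}$, from which one readily deduces $\Omega^\infty\Sigma^\infty Z\simeq Z^\gp$ for every $Z\in\Spc^\scr{C}$. Hence the $n$-th space is $U(\mu((\P^1)^{\wedge n})^\gp)$. For $n\ge 1$, $(\P^1)^{\wedge n}$ is a suspension, so $\mu((\P^1)^{\wedge n})$ is already grouplike (Lemma \ref{lemm:connectivity}) and the group completion is redundant; using the definition of $h^\scr{C}$ together with Lemma \ref{lemm:C-mot-loc}, we conclude $U\mu((\P^1)^{\wedge n})\simeq L_\mot h^\scr{C}((\P^1)^{\wedge n})$, as required.

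It remains to treat $n=0$, i.e., to identify $U(\mu(*_+)^\gp)$ with $L_\mot h^\scr{C}_+(*)^\gp$. The plan is to write $(\ph)^\gp\simeq\Omega_{S^1}\Sigma_{S^1}$ on $\Spc^\scr{C}$ (a consequence of Lemmas \ref{lemm:connectivity} and \ref{lemm:S1-cancel}) and commute $U$ past $\Sigma_{S^1}$ on $\mu(*_+)$. Both $\Sigma_{S^1}U\mu(*_+)$ and $U\Sigma_{S^1}\mu(*_+)$ are grouplike by Lemma \ref{lemm:connectivity}, and the natural comparison map between them becomes an equivalence after applying $\Sigma^\infty_{S^1}$, since the stable $\mu^*$ commutes with $\Sigma_{S^1}$; the $S^1$-stable fully faithful embedding $\Spc^{\scr C\gp}\hookrightarrow\SHSC$ supplied by Proposition \ref{prop:cancellation} then transports the equivalence back down to $\Spc^{\scr C\gp}$. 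The main obstacle I anticipate is precisely this bookkeeping for $n=0$; everything else is a formal consequence of the adjunctions and cancellation.
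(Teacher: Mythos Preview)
Your main chain of equivalences is correct and matches the paper's proof essentially verbatim: use the projection formula for $\mu^*$ against the invertible $(\P^1)^{\wedge n}$, commute $\mu^*$ with $\Omega^\infty$, and then invoke cancellation to get $\Omega^\infty\Sigma^\infty\simeq(\ph)^\gp$ on $\Spc^\scr{C}$. The treatment of $n\ge 1$ via Lemma~\ref{lemm:connectivity} is also the same as the paper's.

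The $n=0$ argument, however, does not work as written. You are trying to compare $\Sigma_{S^1}U\mu(*_+)$ and $U\Sigma_{S^1}\mu(*_+)$, which are objects of $\Spc(k)_*$, but you then invoke the fully faithful embedding $\Spc^{\scr C\gp}\hookrightarrow\SHSC$, which concerns $\scr C$-spaces, not ordinary motivic spaces; it says nothing about detecting equivalences in $\Spc(k)_*$. Moreover, even if you could show $U\Sigma\simeq\Sigma U$, the conclusion would be $U(\mu(*_+)^\gp)\simeq\Omega_{S^1}\Sigma_{S^1}U\mu(*_+)$, and in the non-semiadditive $\Spc(k)_*$ the endofunctor $\Omega_{S^1}\Sigma_{S^1}$ is \emph{not} group completion of the underlying commutative monoid. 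There is also no reason for $U$ to commute with $\Sigma_{S^1}$: $\gamma_*$ preserves sifted colimits and limits but not pushouts, since those are computed differently in the semiadditive source than in $\PSh_\Sigma(\Sm_k)_*$.

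The fix is much simpler and is what the paper uses implicitly (and states explicitly in the proof of Corollary~\ref{cor:identify-unit}): $U$ commutes with group completion. Indeed, $U$ preserves limits (right adjoint) and sifted colimits (precomposition, together with Lemma~\ref{lemm:C-mot-loc}), while group completion of a commutative monoid can be computed as $\Omega B(\ph)$, a limit of a sifted colimit. Hence $U(\mu(*_+)^\gp)\simeq (U\mu(*_+))^\gp\simeq L_\mot h^\scr{C}_+(*)^\gp$ directly.
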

\begin{proof}
We need to determine $\Omega^\infty(\mu^*(\1_\scr{C}) \wedge (\P^1)^{\wedge n})$.
Since $(\P^1)^{\wedge n}$ is invertible, we have $\mu^*(\ph \wedge (\P^1)^{\wedge n}) \wequi \mu^*(\ph) \wedge (\P^1)^{\wedge n}$.\footnote{We use the following well-known fact: given an adjunction $F: \scr C \adj \scr D: G$ with $F$ symmetric monoidal, $X \in \scr C$ strongly dualizable and $Y \in \scr D$, then the canonical map $X \otimes GY \to G(FX \otimes Y)$ is an equivalence.}\NB{ref?}
Since $\mu^*$ commutes with $\Omega^\infty$, we get using Lemmas \ref{lemm:Gm-cancel-deduction} and \ref{lemm:C-mot-loc} that \[ \Omega^\infty(\mu^*(\1) \wedge (\P^1)^{\wedge n}) \wequi \Omega^\infty \mu^*(\Sigma^\infty \mu((\P^1)^{\wedge n})) \wequi \mu^*\Omega^\infty\Sigma^\infty\mu((\P^1)^{\wedge n}) \wequi \mu^*\mu((\P^1)^{\wedge n})^\gp \wequi L_\mot h^{\scr C}((\P^1)^{\wedge n})^\gp. \]
It remains to prove that for $n \ge 1$ we have \[ L_\mot h^{\scr C}((\P^1)^{\wedge n})^\gp \wequi L_\mot h^{\scr C}((\P^1)^{\wedge n}); \] since $\P^1 \wequi \Sigma \Gm$ this is immediate from Lemma \ref{lemm:connectivity}.
\end{proof}

\subsection{Proving cancellation}
\begin{proposition} \label{prop:abstract-cancel}
Let $\scr C$ be an additive, symmetric monoidal, ordinary $1$-category, $G \in \scr C$ and $\1 \xrightarrow{i} G \xrightarrow{p} \1$ a retraction which admits a complement $\bar G$.
Assume that $\Sigma_G := G \otimes \ph$ admits a right adjoint $\Omega_G$.
Suppose given a set $\scr S$ of objects of $\scr C$ with $\1 \in \scr S$ and closed under tensor products with $G$, as well as for each $X \in \scr S$ a map \[ \rho_X: \Omega_G \Sigma_G X \to X. \]
Assume that the following hold.
\begin{enumerate}
\item For $X \in \scr S$, the following diagram commutes
\begin{equation*}
\begin{CD}
G \otimes \Omega_G \Sigma_G X @>{G \otimes \rho_X}>> G \otimes X \\
@VVV                                                   @|        \\
\Omega_G \Sigma_G (G \otimes X) @>{\rho_{G \otimes X}}>> G \otimes X
\end{CD}
\end{equation*}
  where the left hand vertical map is an instance of the general type of map $A \otimes \Omega_G \Sigma_G X \to \Omega_G \Sigma_G (A \otimes X)$ (adjoint to $\Sigma_G A \otimes \Omega_G \Sigma_G X \wequi A \otimes \Sigma_G \Omega_G \Sigma_G X \to A \otimes \Sigma_G X \wequi \Sigma_G A \otimes X$).

\item For $X \in \scr S$, the following diagram commutes
\begin{equation*}
\begin{CD}
\Omega_G \Sigma_G (X) @>{\rho_{X}}>> X \\
@V{\Omega_G \Sigma_G i \otimes X}VV         @V{i \otimes X}VV \\
\Omega_G \Sigma_G (G \otimes X) @>{\rho_{G \otimes X}}>> G \otimes X \\
@V{\Omega_G \Sigma_G p \otimes X}VV         @V{p \otimes X}VV \\
\Omega_G \Sigma_G (X) @>{\rho_{X}}>> X. \\
\end{CD}
\end{equation*}

\item For $X \in \scr S$, the composite \[ X \xrightarrow{\bar u} \Omega_{\bar G} \Sigma_{\bar G} X \subset \Omega_G \Sigma_G X \xrightarrow{\rho_X} X \] is the identity.
\item The object $\bar G$ is symmetric.\footnote{I.e. the cyclic permutation on ${\bar G}^{\otimes n}$ is the identity for some $n \ge 2$, in which case $\bar G$ is called $n$-symmetric.}
\end{enumerate}

Then for all $X \in \scr S$ the unit map $X \xrightarrow{\bar u} \Omega_{\bar G} \Sigma_{\bar G} X$ is an equivalence.
\end{proposition}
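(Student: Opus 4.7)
Since $\1$ is a retract of $G$ with complement $\bar G$, there is a direct sum decomposition $G \wequi \1 \oplus \bar G$ in the additive $1$-category $\scr C$, which gives rise to identifications of endofunctors $\Sigma_G \wequi \id \oplus \Sigma_{\bar G}$ and $\Omega_G \wequi \id \oplus \Omega_{\bar G}$. Under these, $\Omega_G \Sigma_G X$ decomposes into four natural summands, one of which is $\Omega_{\bar G}\Sigma_{\bar G} X$; the inclusion of this summand is the one appearing in condition (3), and the unit $\bar u: X \to \Omega_{\bar G}\Sigma_{\bar G} X$ factors through it. Condition (3) then says precisely that the restriction $\rho_X|_{\Omega_{\bar G}\Sigma_{\bar G} X}$ is a left inverse to $\bar u$, so it remains to establish the reverse identity $\bar u \circ \rho_X|_{\Omega_{\bar G}\Sigma_{\bar G} X} = \id_{\Omega_{\bar G}\Sigma_{\bar G} X}$.

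The plan, following Voevodsky's original cancellation argument transcribed into this abstract setting, is to use naturality of the unit $\bar u$ together with the compatibilities (1) and (2) to rewrite this composite as an explicit endomorphism of $\Omega_{\bar G}\Sigma_{\bar G}\Omega_{\bar G}\Sigma_{\bar G} X$, evaluated at $\bar u_{\Omega_{\bar G}\Sigma_{\bar G} X}$. Condition (1) permits interchanging $\rho$ with a $G$-tensoring, while condition (2) lets us pass between $G$- and $\bar G$-tensorings via the retraction maps $i$ and $p$; applied in tandem they allow us, at the cost of introducing a swap between ``inner'' and ``outer'' $\bar G$-factors, to move the outer $\Omega_{\bar G}\Sigma_{\bar G}$ past $\rho_X$. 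Iterating this manoeuvre $n$ times converts $\bar u \circ \rho_X|_{\Omega_{\bar G}\Sigma_{\bar G} X}$ into a composite built from $\bar u^{\otimes n}$ and $\rho_X$ acting on $\Omega_{\bar G}^n \Sigma_{\bar G}^n X \wequi \Omega_{\bar G^{\otimes n}}\Sigma_{\bar G^{\otimes n}} X$, which differs from $\id_{\Omega_{\bar G}\Sigma_{\bar G} X}$ only by the cyclic $n$-permutation of the $n$ tensor factors of $\bar G$.

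The main technical obstacle is the bookkeeping in this iterated chase: each application of (1)–(2) injects a two-cycle on adjacent $\bar G$-factors, and one must verify that after $n$ iterations these two-cycles compose to precisely the cyclic $n$-permutation on $\bar G^{\otimes n}$. This verification is purely diagrammatic and takes place entirely in the $1$-category $\scr C$, where equality of morphisms is literal; it is streamlined by the splitting $G \wequi \1 \oplus \bar G$ and by choosing $n$ to be the integer supplied by the symmetry hypothesis (4).

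Having reduced the discrepancy to a cyclic permutation of $\bar G^{\otimes n}$, condition (4) finishes the argument: this permutation is the identity on $\bar G^{\otimes n}$, hence on $\Omega_{\bar G^{\otimes n}}\Sigma_{\bar G^{\otimes n}} X$, so the discrepancy vanishes and we conclude $\bar u \circ \rho_X|_{\Omega_{\bar G}\Sigma_{\bar G} X} = \id$. Together with (3), this exhibits $\bar u$ as a two-sided inverse to $\rho_X|_{\Omega_{\bar G}\Sigma_{\bar G} X}$, and therefore as an equivalence, as required.
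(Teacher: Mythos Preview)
Your proposal is correct and follows the same overall strategy as the paper: reduce to $\bar u\bar\rho=\id$, pass to $\Omega_{\bar G}^N\Sigma_{\bar G}^N$, and kill a cyclic permutation via (4). The organisation differs in one respect worth flagging. You suggest applying (1)--(2) iteratively, each step introducing a transposition; but after the first step the relevant $\rho$ lives at $\Sigma_{\bar G} X$, which is not assumed to lie in $\scr S$, so (1)--(2) are not directly available there. The paper instead uses (1) and (2) exactly once: together they force $\rho_{G\otimes X}$ to be block-diagonal for the splitting $G\otimes X\wequi \bar G\otimes X\oplus X$, and a single commutative square then identifies $\bar u\bar\rho$ with $\bar\rho_1\bar u_1'$ (where $\bar u_1'$ is the unit inserting an identity on the \emph{right} factor). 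The iteration is carried by (3) rather than (1)--(2): one first extracts from (3) the relations $\bar\rho_n\bar u_n=\id$ for all $n$ (by passing to the $\Sigma_{\bar G}^n X$-summand of $\Sigma_G^n X\in\scr S$), then forms $p(\sigma)=\bar\rho_1\cdots\bar\rho_{N-1}\,\sigma\,\bar u_{N-1}\cdots\bar u_1$ and observes that $p(\id)=\id$ while $p(\sigma_l)=\bar\rho_1\bar u_1'$ for the left-cycle $\sigma_l$; condition (4) then gives $p(\id)=p(\sigma_l)$, finishing the argument.
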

\begin{proof}\NB{this is so fishy!!!}
Write $\bar \rho$ for the composite $\Omega_{\bar G} \Sigma_{\bar G} X \subset \Omega_G \Sigma_G X \xrightarrow{\rho_X} X$.
By assumption, $\bar \rho \bar u = \id_X$.
It remains to prove that $\bar u \bar \rho = \id_{\Omega_{\bar G}\Sigma_{\bar G} X}$.

As a warm-up, note that there are \emph{two} natural maps $\Omega_{\bar G} \Sigma_{\bar G} X \to \Omega_{\bar G}^2 \Sigma_{\bar G}^2 X$, corresponding to ``inserting an identity on the left or right factor''.
The map $\bar u_1 = \Omega_{\bar G} u_{\Sigma_{\bar G} X}$ inserts the identity on the left, and the map $\bar u_1'$ inserting the identity on the right is obtained by conjugating with the twist map.
More generally we have a map \[ \bar u_n = \Omega_{\bar G}^n u_{\Sigma_{\bar G}^n X}: \Omega_{\bar G}^n \Sigma_{\bar G}^n X \to \Omega_{\bar G}^{n+1} \Sigma_{\bar G}^{n+1} X \] inserting an identity on the left, and other natural maps with the same source and target are obtained by acting with the symmetric group $S_{n+1}$.

Write $\Sigma^n_G X = \Sigma^n_{\bar G} X \oplus R$.
We can think of $\rho_{\Sigma^n_G X}$ as a two by two matrix, with upper left hand corner a map \[ \rho_{\Sigma^n_{\bar G} X}: \Omega_G \Sigma_G \Sigma^n_{\bar G} X \to \Sigma^n_{\bar G} X. \]
Similarly $\bar u_{\Sigma^n_G X}$ can be though of as a two by two matrix, but this time a diagonal one (by naturality).
Since $\rho_{\Sigma^n_G X} \bar u_{\Sigma^n_G X}$ is the identity, we deduce that also $\rho_{\Sigma^n_{\bar G} X} \bar u_{\Sigma^n_{\bar G} X}$ must be the identity.
In other words, (3) also holds for objects of the form $\Sigma^n_{\bar G} X$.

Define a map \[ \bar \rho_n: \Omega_{\bar G}^{n+1} \Sigma_{\bar G}^{n+1} X \subset \Omega_{\bar G}^n \Omega_G \Sigma_G \Sigma_{\bar G}^n X \xrightarrow{\rho_{\Sigma_{\bar G}^n X}} \Omega_{\bar G}^n \Sigma_{\bar G}^n X. \]
Then \begin{equation}\label{eq:rho-u-induced} \bar \rho_n \bar u_n = \id; \end{equation} indeed this map is obtained by applying $\Omega_{\bar G}^n$ to the composite \[ \Sigma_{\bar G}^n X \xrightarrow{\bar u} \Omega_{\bar G} \Sigma_{\bar G} (\Sigma_{\bar G}^n X) \xrightarrow{\rho_{\Sigma^n_{\bar G} X}} \Sigma_{\bar G}^n X, \] which is the identity by the extended version of (3) that we just established.

For $N \ge 2$ and $\sigma \in S_N$ acting on ${\bar G}^{\otimes N}$, consider the composite \[ p(\sigma): \Omega_{\bar G} \Sigma_{\bar G} X \xrightarrow{\bar u_1}  \Omega_{\bar G}^2 \Sigma_{\bar G}^2 X \xrightarrow{\bar u_2} \dots \to \Omega_{\bar G}^N \Sigma_{\bar G}^N X \xrightarrow{\sigma} \Omega_{\bar G}^N \Sigma_{\bar G}^N X \xrightarrow{\bar\rho_{N-1}} \Omega_{\bar G}^{N-1} \Sigma_{\bar G}^{N-1} X \to \dots \xrightarrow{\bar \rho_1} \Omega_{\bar G} \Sigma_{\bar G} X. \]
If $\sigma = \id$ then by iterated application of \eqref{eq:rho-u-induced} we get \[ p(\id) = \id. \]
On the other hand if $\sigma=\sigma_l$ is the permutation ``cycling to the left'' we find that (suppressing subscripts for readability) \[ \sigma_l \circ \bar u \circ \dots \circ \bar u = \bar u \circ \dots \circ \bar u \circ \bar u_1', \] i.e. ``one of the identities has been inserted at the right''.
Consequently $p(\sigma_l) = \bar \rho_1 \bar u_1'$ (again using \eqref{eq:rho-u-induced} repeatedly).
If furthermore $\bar G$ is $N$-symmetric then we obtain \[ \id_{\Sigma_{\bar G} \Omega_{\bar G}} X = p(\id) = p(\sigma_l) = \bar \rho_1 \bar u_1'. \]
It thus suffices to show that the following diagram commutes
\begin{equation*}
\begin{CD}
\Omega_{\bar G} \Sigma_{\bar G} X @>{\bar u_1'}>> \Omega_{\bar G}^2 \Sigma_{\bar G}^2 X \\
@V{\bar \rho}VV                                    @V{\bar \rho_1}VV \\
X                                 @>{\bar u}>> \Omega_{\bar G} \Sigma_{\bar G} X.
\end{CD}
\end{equation*}
By adjunction and definition of $\bar \rho$, this is equivalent to the commutativity the outer square in the following diagram
\begin{equation*}
\begin{CD}
\bar G \otimes \Omega_{\bar G} \Sigma_{\bar G} X @>>> \Omega_{\bar G} \Sigma_{\bar G} ({\bar G} \otimes X) \\
@VVV                                                      @VVV \\
\bar G \otimes \Omega_{G} \Sigma_{G} X @>>> \Omega_{G} \Sigma_{G} ({\bar G} \otimes X) \\
@V{\id_{\bar G} \otimes \rho_X}VV                         @V{\rho_{\Sigma_{\bar G} X}}VV  \\
\bar G \otimes X                                 @= \bar G \otimes X.
\end{CD}
\end{equation*}
The top square commutes without any assumptions.
For the bottom square, it will suffice to prove that $\rho_{\Sigma_G X}$ respects the decomposition $\Sigma_G X \wequi \Sigma_{\bar G} X \oplus X$.
Indeed then we may replace all instances of $\bar G$ by $G$, and hence have commutativity by (1).
We can write $\rho_{\Sigma_G X}$ as the matrix $\begin{pmatrix} \rho_{\Sigma \bar G} & f \\ g & h \end{pmatrix}$.
Then in condition (2) the upper square implies that $f=0$ (and $h=\rho_X$), whereas the lower square implies that $g=0$ (and $h=\rho_X$).
The result follows.
\end{proof}

\subsection{Rational contractibility}
We recall the notion of a rationally contractible presheaf in Appendix \ref{app:ratl-contr}.
\begin{definition} \label{def:C-satisfies-ratl-contr}
We say that $\scr C$ \emph{satisfies rational contractibility} if for every $n>0$ the presheaf $h^{\scr C}(\Gmp{n})^\gp$ is rationally contractible.
\end{definition}

\begin{proposition} \label{prop:ratl-contr-veff}
Suppose that $\scr C$ satisfies cancellation and rational contractibility.
Then \[ \mu^*(\1_{\SH^{\scr C}}) \in \SH(k)^\veff. \]
\end{proposition}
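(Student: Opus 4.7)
The plan is to combine the $\P^1$-$\Omega$-spectrum presentation of $\mu^*(\1_\scr{C})$ obtained from Corollary \ref{cor:representing-spectrum} with the theory of rationally contractible presheaves developed in Appendix \ref{app:ratl-contr}. Since $\scr C$ satisfies cancellation, Corollary \ref{cor:representing-spectrum} gives $\mu^*(\1_\scr{C}) \wequi (L_\mot h^{\scr C}_+(*)^\gp, L_\mot h^{\scr C}(\P^1), L_\mot h^{\scr C}((\P^1)^{\wedge 2}), \dots)$. The strategy is to show each level, suitably reinterpreted, is very effective, and then assemble.

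First I would rewrite the levels using $\P^1 \wequi S^1 \wedge \Gm$: since $h^{\scr C}$ commutes with $S^1$-smashing on pointed presheaves, we have $h^{\scr C}((\P^1)^{\wedge n}) \wequi \Sigma^n h^{\scr C}(\Gmp{n})$ for $n \ge 1$. By Lemma \ref{lemm:connectivity}, this is already grouplike, so the $n$-th level is equivalent to $\Sigma^n L_\mot h^{\scr C}(\Gmp{n})^\gp$, and by hypothesis $h^{\scr C}(\Gmp{n})^\gp$ is rationally contractible (Definition \ref{def:C-satisfies-ratl-contr}). The $n=0$ term is handled directly, as $\Sigma^\infty_+ X \in \SH(k)^\veff$ by definition for any $X \in \Sm_k$, whence any $L_\mot h^{\scr C}_+(*)^\gp$ contributes a very effective summand via $\Sigma^\infty$.

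Next I would invoke the main result of Appendix \ref{app:ratl-contr}, which reformulates Suslin's argument in the setting of presheaves of spaces: rational contractibility of a grouplike presheaf $F$ forces $L_\mot F$ to be motivically $0$-connective in the sense that the associated motivic spectrum is very effective. Applied level by level, this promotes each $\Sigma^n L_\mot h^{\scr C}(\Gmp{n})^\gp$ (for $n \ge 1$) to a very effective motivic $S^1$-spectrum. Since $\SH(k)^\veff$ is closed under colimits and extensions — and in particular under the colimit $\hocolim_n \Omega^n_{\P^1} \Sigma^\infty_{\P^1}(E_n)$ that assembles a $\P^1$-$\Omega$-spectrum — we conclude $\mu^*(\1_\scr{C}) \in \SH(k)^\veff$.

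The main obstacle is Appendix \ref{app:ratl-contr} itself: one must translate Suslin's chain-level argument, which originally operated in a $\Z$-linear $1$-categorical world, into a statement about grouplike presheaves of spaces that yields very effectivity in the precise $t$-structural sense required by the characterization of $\SH(k)^\veff$. Once this translation is in hand, the steps above are essentially formal, relying only on the compatibility of $L_\mot$ with $S^1$-suspension on grouplike objects and the closure properties of $\SH(k)^\veff$ under colimits and $\P^1$-suspension.
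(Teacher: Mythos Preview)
Your assembly step has a genuine gap. You write that ``$\SH(k)^\veff$ is closed under colimits and extensions --- and in particular under the colimit $\hocolim_n \Omega^n_{\P^1} \Sigma^\infty_{\P^1}(E_n)$ that assembles a $\P^1$-$\Omega$-spectrum''. But $\Omega_{\P^1}$ (equivalently $\Sigma^{-1}_{\P^1}$) does \emph{not} preserve $\SH(k)^\veff$; indeed $\Sigma^{-n}_{\P^1}\SH(k)^\veff = \SH(k)^\veff(n)$ is the $n$-th stage of the very effective slice filtration, and this is exactly the direction that takes you out. Knowing that each $\Sigma^\infty E_n$ is very effective is vacuous (every suspension spectrum of a motivic space is), so your levelwise argument proves nothing about $\mu^*(\1)$. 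What you would actually need is $\Sigma^\infty E_n \in \Sigma^n_{\P^1}\SH(k)^\veff$, i.e.\ that $\Sigma^\infty_{S^1} L_\mot h^\scr{C}(\Gmp{n})^\gp$ is $n$-effective in the slice sense; you have not explained how rational contractibility yields this, and Appendix~\ref{app:ratl-contr} does not claim it. Corollary~\ref{cor:ratl-contr-crit} only says $|(L_{\A^1} F)(\hat\Delta^\bullet_K)| \wequi *$, not that the associated spectrum is very effective.

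The paper avoids this trap entirely. Rather than arguing levelwise, it invokes two external detection criteria: \cite[Theorem 4.4]{bachmann-criterion} says $E \in \SH(k)^\eff$ iff $|\omega^\infty(E \wedge \Gmp{n})(\hat\Delta^\bullet_K)| \wequi 0$ for all $n>0$ and finitely generated $K/k$; and \cite[\S3]{bachmann-very-effective} upgrades this to $\SH(k)^\veff$ via a connectivity condition on $\omega^\infty(E)$. Cancellation identifies $\Omega^\infty(\mu^*(\1) \wedge \Gmp{n}) \wequi L_\mot h^\scr{C}(\Gmp{n})^\gp$, and then Proposition~\ref{prop:mot-loc-perf} plus Corollary~\ref{cor:ratl-contr-crit} verify the criteria directly. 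The point is that these criteria are phrased in terms of $\Omega^\infty$ of $\Gm$-twists of $E$, which is precisely what rational contractibility of $h^\scr{C}(\Gmp{n})^\gp$ controls; no closure property of $\SH(k)^\veff$ under $\P^1$-deloopings is ever invoked.
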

\begin{proof}
By \cite[Theorem 4.4]{bachmann-criterion}, in order to show that $\mu^*(\1) \in \SH(k)^\eff$ we need to show that for all $n>0$ and $K/k$ finitely generated we have \[ |\omega^\infty(\mu^*(\1) \wedge \Gmp{n})(\hat\Delta^\bullet_K)| \wequi 0. \]
Similarly by \cite[\S3]{bachmann-very-effective}, assuming that $\mu^*(\1) \in \SH(k)^\eff$, in order to show that $\mu^*(\1) \in \SH(k)^\veff$ we need to show that $\omega^\infty(\mu^*(\1))$ has vanishing negative homotopy sheaves.
For this it suffices to show that \begin{equation}\label{eq:veff-interm} |\Omega^\infty(\mu^*(\1) \wedge \Gmp{n})(\hat\Delta^\bullet_K)| \wequi 0 \quad\text{and}\quad \Omega^\infty(\Sigma^i \mu^*(\1) \wedge \Gmp{n}) \in \Spc(k)_{\ge i} \end{equation} for all $i \ge 0$.
Indeed the second condition implies that $(\mu^*(\1) \wedge \Gmp{n})(\hat\Delta^\bullet_K)$ takes values in $\SH_{\ge 0}$ (using Proposition \ref{prop:mot-loc-perf}, say), and then $\Omega^\infty$ commutes with the geometric realization \cite[Proposition 1.4.3.8]{lurie-ha}.

As in Corollary \ref{cor:representing-spectrum} we have $\Sigma^i \mu^*(\1) \wedge \Gmp{n} \wequi \mu^*(\Sigma^\infty \mu(\Sigma^i\Gmp{n}))$ and hence \[ \Omega^\infty(\Sigma^i \mu^*(\1) \wedge \Gmp{n}) \wequi L_\mot h^\scr{C}(\Sigma^i \Gmp{n})^\gp; \] here we have used Lemma \ref{lemm:Gm-cancel-deduction}.
The functor $h^\scr{C}$ commutes with $\Sigma^i$, when viewed as taking values in presheaves of commutative monoids.
By Proposition \ref{prop:mot-loc-perf} we have $L_\mot = L_\Nis L_{\A^1}$, which takes sectionwise $n$-connected presheaves to Nisnevich locally $n$-connected presheaves.
This implies the second condition in \eqref{eq:veff-interm}.
Again by Proposition \ref{prop:mot-loc-perf}, we have $L_\mot \wequi L_{\A^1}$ when evaluated on semilocal schemes; thus \[ |\Omega^\infty(\mu^*(\1) \wedge \Gmp{n})(\hat\Delta^\bullet_K)| \wequi |(L_{\A^1}h^\scr{C}(\Gmp{n})^\gp)(\hat\Delta^\bullet_K)|. \]
The first condition in \eqref{eq:veff-interm} thus follows from Corollary \ref{cor:ratl-contr-crit}.
\end{proof}

\begin{remark} \label{rmk:ratl-contr-simplified}
In order to prove that $h^\scr{C}(\Gmp{n})^\gp$ is rationally contractible, it suffices to prove that $h^\scr{C}((\A^1 \setminus 0)^{\times n},x_0)$ is rationally contractible as a presheaf of commutative monoids, where $x_0 = (1,1, \dots, 1)$ is the base point.
Indeed this implies that $h^\scr{C}((\A^1 \setminus 0)^{\times n},x_0)^\gp$ is rationally contractible by Lemma \ref{lemm:ratl-contr-gp}, and $h^\scr{C}(\Gmp{n})^\gp$ is a retract of $h^\scr{C}((\A^1 \setminus 0)^{\times n},x_0)^\gp$, so we conclude by Example \ref{ex:retract-contr}.
\end{remark}

\begin{corollary} \label{cor:identify-unit}
Suppose that $\scr C$ satisfies cancellation and rational contractibility.
Then \[ \mu^*(\1_{\SH^{\scr C}}) \wequi \Sigma^\infty_\fr \mu^*(\mu_+(*)). \]
\end{corollary}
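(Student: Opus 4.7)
The proof combines Proposition \ref{prop:ratl-contr-veff} with the motivic recognition principle of Elmanto--Hoyois--Khan--Sosnilo--Yakerson, which asserts that $\Sigma^\infty_\fr \colon \Spc^\fr(k)^\gp \to \SH(k)^\veff$ is an equivalence with inverse $\Omega^\infty_\fr$. My plan is to construct the natural comparison map
\[ \Sigma^\infty_\fr \mu^*(\mu_+(*)) \to \mu^*(\1_{\SH^\scr C}), \]
obtained as the mate of the equality $\mu \Sigma^\infty_\fr \simeq \Sigma^\infty \mu$ between left adjoints (recorded in the commutative diagram of \S 2) evaluated at $\mu_+(*) = \1_{\Spc^\scr C}$, and to show that it is an equivalence.

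Both sides of this map lie in $\SH(k)^\veff$: the target by Proposition \ref{prop:ratl-contr-veff}, and the source because $\Sigma^\infty_\fr$ factors through group completion (since $\Omega^\infty_\fr$ automatically lands in grouplike objects) followed by the recognition equivalence onto $\SH(k)^\veff$. By the recognition principle, it therefore suffices to check that the map induces an equivalence after applying $\Omega^\infty_\fr$. On the target side, the commutation $\Omega^\infty_\fr \mu^* \simeq \mu^* \Omega^\infty$ (obtained by taking right adjoints of $\mu \Sigma^\infty_\fr \simeq \Sigma^\infty \mu$) together with the calculation $\Omega^\infty \Sigma^\infty Y \simeq Y^\gp$ carried out in the proof of Corollary \ref{cor:representing-spectrum} yields
\[ \Omega^\infty_\fr \mu^*(\1_{\SH^\scr C}) \simeq \mu^*(\mu_+(*)^\gp). \]
On the source side, the analogous framed identity $\Omega^\infty_\fr \Sigma^\infty_\fr X \simeq X^\gp$ gives
\[ \Omega^\infty_\fr \Sigma^\infty_\fr \mu^*(\mu_+(*)) \simeq \mu^*(\mu_+(*))^\gp. \]

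The remaining point, which will be the main technical obstacle, is to identify $\mu^*(\mu_+(*))^\gp$ with $\mu^*(\mu_+(*)^\gp)$. This should follow because both $\mu^*$ and $(\ph)^\gp$ are computed sectionwise on $\PSh_\Sigma$, so they commute prior to motivic localization; the identification then survives $L_\mot$ by combining Lemma \ref{lemm:C-mot-loc} (which says $\mu^*$ commutes with motivic localization) with the fact that $L_\mot$ preserves grouplikeness, as exploited in Proposition \ref{prop:mot-loc-perf}.
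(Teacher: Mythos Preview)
Your proposal is correct and follows essentially the same route as the paper: both arguments combine Proposition~\ref{prop:ratl-contr-veff} with the recognition equivalence $\Spc^\fr(k)^\gp \simeq \SH(k)^\veff$, the commutation $\Omega^\infty_\fr \mu^* \simeq \mu^* \Omega^\infty$, the cancellation identity $\Omega^\infty\Sigma^\infty \simeq (\ph)^\gp$ from Lemma~\ref{lemm:Gm-cancel-deduction}, and the fact that $\mu^*$ commutes with group completion. The only difference is organizational---the paper writes a direct chain of equivalences starting from $\mu^*(\1)$, whereas you set up a comparison map and verify it after $\Omega^\infty_\fr$---but the substance is identical.
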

\begin{proof}
Since $\mu^*(\1_{\SH^{\scr C}})$ is very effective and $\Spc^\fr(k)^\gp \wequi \SH(k)^\veff$ \cite[Theorem 3.5.14]{EHKSY}, we deduce that \[ \mu^*(\1_{\SH^{\scr C}}) \wequi \Sigma^\infty_\fr \Omega^\infty_\fr \mu^* \Sigma^\infty \mu_+(*) \wequi \Sigma^\infty_\fr \mu^* \Omega^\infty \Sigma^\infty \mu_+(*) \wequi \Sigma^\infty_\fr \mu^* (\mu_+(*)^\gp) \wequi \Sigma^\infty_\fr \mu^* (\mu_+(*)), \] using Lemma \ref{lemm:Gm-cancel-deduction} as well as that $\mu^*$ commutes with group completion and $\Sigma^\infty_\fr$ inverts group completion.
\end{proof}

\section{The case of finite flat correspondences} \label{sec:ffl}
\subsection{Generalities}
Let $S$ be a scheme.
Write $\Cor^\flf(S)$ for the symmetric monoidal, semiadditive $(2,1)$-category with the same objects as $\Sm_S$ and morphisms the groupoids of spans \[ X \xleftarrow{p} Z \to Y, \] where $p$ is required to be finite locally free (see e.g. \cite[\S5]{BarwickMackey} for a construction of span categories in a much more general context).
Denote by $\Cor^\fsyn(S)$ the category obtained in a similar way, but requiring $p$ to be finite syntomic.
There is an evident symmetric monoidal functor $\Cor^\fsyn(S) \to \Cor^\flf(S)$ which preserves finite coproducts.
Since a functor $\Cor^\fr(S) \to \Cor^\fsyn(S)$ was constructed in \cite[\S4.2.37]{EHKSY}, we all in all obtain a functor $\mu: \Cor^\fr(S) \to \Cor^\flf(S)$.
This satisfies all the axioms of a motivic category of correspondences (see Definition \ref{def:mot-correspondences}).
The only non-trivial part is the following.
\begin{lemma}
The category $\Cor^\flf(S)$ is compatible with the Nisnevich topology.
\end{lemma}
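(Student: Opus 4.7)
The plan is to follow the approach used for $\Cor^\fr$ in \cite[\S3.2]{EHKSY} with minor modifications, since the essential geometric content there --- the henselian lifting for finite correspondences --- carries over unchanged to the finite locally free setting.

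A map of presheaves of $E_\infty$-monoid spaces on $\Sm_S$ is a Nisnevich-local equivalence if and only if it induces an equivalence on stalks at henselian local essentially smooth $S$-schemes, so it suffices to fix such a $Y$ and prove that $\hflfp{R}(Y) \to \hflfp{X}(Y)$ is an equivalence. Here $\hflfp{X}(Y)$ is the classifying space of spans $Y \xleftarrow{\pi} Z \to X$ with $\pi$ finite locally free, with monoid structure given by disjoint union over $Y$; the source is the analogous space with $Z \to X$ required to factor through the Nisnevich cover $\{U_i \to X\}$ generating $R$. Since $\pi$ is finite and $Y$ is henselian local, $Z$ decomposes as a finite disjoint union $Z = \bigsqcup_\alpha Z_\alpha$ of henselian local schemes (one for each point of the fibre of $\pi$ above the closed point of $Y$). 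For each $\alpha$, the pullback $\{Z_\alpha \times_X U_i \to Z_\alpha\}$ is a Nisnevich cover of the henselian local scheme $Z_\alpha$, which must split; hence $Z_\alpha \to X$ factors through some $U_{i(\alpha)}$. Taking disjoint union, $Z \to X$ factors through $\bigsqcup_i U_i \to X$, so the span represents an element of $\hflfp{R}(Y)$. This gives essential surjectivity; the proof of full faithfulness is analogous and easier, relying on the uniqueness of the decomposition $Z = \bigsqcup_\alpha Z_\alpha$ and the factorisation $Z_\alpha \to U_{i(\alpha)}$ (which is determined by the closed point of $Z_\alpha$).

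The main subtle point will be the correct interpretation of $\hflfp{R}$ for $R$ a sieve rather than a representable, and checking that the argument produces an equivalence of the full $E_\infty$-monoid structures rather than merely of underlying sets of components. Both are handled routinely using that $\hflfp{\ph}$ preserves colimits, that $R$ is the colimit of its representables, and that the semiadditive monoid structure on $\Map_{\Cor^\flf}(Y, X)$ is given on spans by disjoint union over $Y$ --- which is in turn compatible with the componentwise argument above.
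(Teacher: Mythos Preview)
Your approach is the paper's: reduce to henselian local $Y$, split $Z$ (finite over $Y$) into henselian local pieces, and lift each through the cover. The paper streamlines the two points you flag as ``subtle.'' First, rather than deferring the identification of $\hflfp{R}$, it computes the fiber of $\hflfp{R}(Y) \to \hflfp{X}(Y)$ over a span $Y \leftarrow Z \xrightarrow{q} X$ directly as $|U^{\times_X \bullet} \times_X Z|$ (using that $h_+$ preserves sifted colimits and universality of colimits in spaces); this is contractible or empty according to whether $q$ factors through $U$, so $\hflfp{R} \hookrightarrow \hflfp{X}$ is a monomorphism with the stated image, and only surjectivity on stalks remains---your separate ``full faithfulness'' step and the $E_\infty$-monoid worry disappear. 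Second, the paper justifies checking on stalks by observing the presheaves are $1$-truncated (citing \cite[Lemma 6.5.2.9]{lurie-htt}) rather than appealing to hypercompleteness, which you would otherwise need to invoke for general $S$.
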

\begin{proof}
Let $U \to X \in \Sm_S$ be a Nisnevich covering with one element, and $R$ the associated sieve.
Then $\hflfp{R} \hookrightarrow \hflfp{X}$ consists of those spans \[ Y \leftarrow Z \xrightarrow{q} X \] where $q$ factors through $U$.
(Indeed by universality of colimits of spaces and since $h_+$ preserves sifted colimits, the fiber over $Y \leftarrow Z \xrightarrow{q} X$ is given by $|U^{\times_X \bullet} \times_X Z|$, which is either empty or contractible depending on whether $q$ factors through $U$.)
In particular this is a map of $1$-truncated presheaves, which is a Nisnevich equivalence if and only if it induces an equivalence on stalks \cite[Lemma 6.5.2.9]{lurie-htt}; i.e. we may assume that $Y$ is Nisnevich local and need to show that every $q$ factors through $U$.
But now $Z$ is a finite disjoint union of Nisnevich local schemes \cite[Tag 04GH(1)]{stacks-project}, so this is clear.
\end{proof}

\subsection{Cancellation}
\begin{lemma} \label{lemm:flatness}
Let $X$ be smooth over a perfect field $k$ and $(p, t): Z \to X \times \Gm$ finite locally free.
Let $f \in \scr O(Z)$ and let \[ Z_n = Z(1 - t^n f) \subset Z. \]
Then for $N$ sufficiently large and all $n>N$, the canonical map $Z_n \to X$ is flat.
\end{lemma}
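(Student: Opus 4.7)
The plan is to apply miracle flatness. Since $X$ is regular (smooth over a perfect field) and $Z$ is Cohen--Macaulay of pure relative dimension $1$ over $X$ (being finite locally free over the smooth $k$-scheme $X \times \Gm$), it will suffice to check, for $n$ sufficiently large, that $1 - t^n f$ is a non-zero-divisor on $Z$ and also on each fibre $Z_x$. Granted this, $Z_n$ is a Cartier divisor in the Cohen--Macaulay scheme $Z$, hence itself Cohen--Macaulay of pure dimension $\dim X$; each fibre $(Z_n)_x$ is zero-dimensional; and miracle flatness (\cite[Tag 00R4]{stacks-project}) yields flatness of $Z_n \to X$.

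Both non-zero-divisor claims I would prove by the same argument. Since $\mathcal{O}_Z$ is finite locally free of some rank $d$ over $\mathcal{O}_{X \times \Gm}$ (on each connected component of constant rank), $f$ satisfies its characteristic polynomial
\[ f^d + c_{d-1}(t)\, f^{d-1} + \cdots + c_0(t) = 0, \qquad c_i(t) \in \mathcal{O}(X)[t, t^{-1}], \]
and by quasi-compactness of $X$ there exists an integer $N$ with $t^N c_i(t) \in \mathcal{O}(X)[t]$ for every $i$. Suppose $W$ is an irreducible component of $Z$ (or of some fibre $Z_x$) on which $1 - t^n f$ vanishes, i.e.\ $f|_W = t^{-n}$. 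A dimension count shows that $W$ dominates $X \times \Gm$ (respectively $\Gm_{\kappa(x)}$): $W$ has dimension $\dim X + 1$ (resp.\ $1$) and is finite over the irreducible target of the same dimension. Hence the pullback $\mathcal{O}(X)[t, t^{-1}] \hookrightarrow \mathcal{O}(W)$ (resp.\ $\kappa(x)[t, t^{-1}] \hookrightarrow \mathcal{O}(W)$) is injective. Substituting $f = t^{-n}$ in the characteristic polynomial and multiplying through by $t^{dn}$ gives the identity
\[ 1 + c_{d-1}(t)\, t^n + c_{d-2}(t)\, t^{2n} + \cdots + c_0(t)\, t^{dn} = 0 \]
in $\mathcal{O}(W)$, and hence by injectivity in $\mathcal{O}(X)[t, t^{-1}]$ (resp.\ its specialization at $x$). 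But for $n > N$, every summand with $i < d$ involves only strictly positive powers of $t$, so the left side has constant term $1$, giving the desired contradiction.

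The main obstacle will be ensuring the threshold $N$ is uniform in $x \in X$; this turns out to be automatic, since the characteristic polynomial is defined globally on $X \times \Gm$ and passing to the fibre over $x$ can only decrease pole orders at $t = 0$.
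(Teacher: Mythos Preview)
Your argument is correct and takes a genuinely different route from the paper's. The paper first reduces to the case where $\scr O_Z$ is finite \emph{free} (not just locally free) over $\scr O_{X \times \Gm}$; this reduction is nontrivial and uses a motivic input, namely that vector bundles on $L \times \Gm$ are trivial whenever $L$ is the henselization of a smooth $k$-variety at a point (argued via the motivic space $\Gr_d$). In the free case the paper then picks a basis, writes multiplication by $f$ as a matrix $(a_{ij})$ over $A[t,t^{-1}]$, and observes that for $n$ large the endomorphism $1-t^n f$ is ``unitriangular'' with respect to the $t$-adic filtration, hence universally injective over $A$; flatness of the cokernel follows. Your Cayley--Hamilton argument bypasses the reduction to free modules entirely, so it avoids the motivic input and is more self-contained commutative algebra; on the other hand the paper's approach yields the stronger conclusion of universal injectivity in one stroke and makes the extension to the two-parameter family in Remark~\ref{rmk:flatness-ext} immediate (your method extends too, but one must bound the pole orders of the characteristic polynomial of $f_1 t^a + f_2 t^b$ uniformly in $a,b \ge 0$). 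One small point worth making explicit in your write-up: the form of miracle flatness you cite (Tag 00R4) requires $\dim \scr O_{Z_n,z} = \dim \scr O_{X,x}$ at each point, which does follow from ``$Z_n$ equidimensional of dimension $\dim X$ with zero-dimensional fibres'' since everything is of finite type over a field, but this step deserves a sentence.
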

\begin{proof}
We first show that there is a Nisnevich covering family $\{U_i \to X\}_{i \in I}$ such that $Z_{U_i} \to U_i \times \Gm$ is finite free.\NB{This is mainly where we use the assumptions on $X$. Feels like it should be true more generally?}
It suffices to show that if $L$ is the henselization of a smooth $k$-variety in a point, then all vector bundles of rank $d$ on $L \times \Gm$ are trivial (for any $d$).
We use the existence of a motivic space $\Gr_d \in \Spc(k)_*$ such that for $Y \in \Sm_k$ affine $[Y_+, \Gr_d]$ is the set of vector bundles on $Y$ up to isomorphism \cite[Theorem 5.2.3]{asok2015affine-I}.
The cofiber sequence \[ L_+ \wequi L_+ \wedge S^0 \to L_+ \wedge \Gm_+ \wequi (L \times \Gm)_+ \to L_+ \wedge \Gm \] induces a fiber sequence \[ \Map(L_+, \Omega_\Gm \Gr_d) \to \Map((L \times \Gm)_+, \Gr_d) \to \Map(L_+, \Gr_d). \]
We seek to show that the middle space is connected.
The motivic space $\Gr_d$ is connected (vector bundles being locally trivial), and hence so is $\Omega_\Gm \Gr_d$ by \cite[Theorem 6.13]{A1-alg-top}.
Thus the two outer spaces are connected, and the claim follows.

Since being flat is fpqc local on the base \cite[Tag 02L2]{stacks-project}, we may replace $X$ by $U_i$ and so assume that $Z \to X \times \Gm$ is finite free.
We may further assume that $X = \Spec(A)$ is affine.
Write $Z = \Spec(B)$.
Then $B$ is an $A$-algebra, we are provided with elements $f,t \in B$, and $B$ is finite free over $A[t, t^{-1}]$.
We need to show that $B_n = B/(1-t^nf)B$ is flat over $A$, for $n$ sufficiently large.
We may assume that $B$ has constant rank over $A[t,t^{-1}]$ and choose a basis $e_1, \dots, e_d$.
We can write \[ f e_i = \sum_j a_{ij} e_j, \] with $a_{ij} \in A[t,t^{-1}]$.
Thus for $n$ sufficiently large we have $t^n a_{ij} \in tA[t]$, for all $i,j$.
This implies that $(1-t^nf): B \to B$ is universally injective over $A$: if $A'$ is any $A$-module, then $(1-t^nf)$ is injective on \[ B' := B \otimes_A A' \wequi A'[t,t^{-1}]\{e_1, \dots, e_d\}. \]
Indeed we can write $0 \ne b' \in B'$ as \[ b' = \sum_{k \ge k_0} t^k b_k, \] with $b_k \in A'\{e_1, \dots, e_d\}$ and $b_{k_0} \ne 0$, and then \[ (1-t^nf)b' \in t^{k_0} b_{k_0} + t^{k_0+1}A'[t]\{e_1, \dots e_d\} \] is non-zero as well.\NB{I.e. $1-t^nf$ is unitriangular.}
The desired flatness now follows from \cite[Tags 058I and 058P]{stacks-project}.\NB{Or \cite[Proposition 11.3.7]{EGAIV}, or just look at exact sequence of $Tor$.}
\end{proof}
\begin{remark} \label{rmk:flatness-ext}
The argument shows that given $f_1, f_2 \in \scr O(Z)$ and $N$ sufficiently large, then \[ Z(1-t^n(f_1 t^a + f_2 t^b)) \to X \] is flat for all $n > N$ and $a, b \ge 0$.
\end{remark}

Define maps $g_n^+, g_n^-: \Gm \times \Gm \to \A^1$ via \[ g_n^+(t_1, t_2) = t_1^n + 1 \text{ and } g_n^-(t_1,t_2) = t_1^n + t_2. \]
Further define maps $\A^1 \times \Gm \times \Gm \to \A^1$ via \[ h_{mn}^\pm(s, t_1, t_2) = s g_n^\pm(t_1, t_2) + (1-s) g_m^\pm(t_1, t_2). \]
Given a span \[ X \times \Gm \leftarrow Z \to Y \times \Gm, \] put \[ Z_{mn}^{\pm} = Z(h_{mn}^\pm) \subset Z \times \A^1; \] note that there are induced spans \[ X \times \A^1 \leftarrow Z_{mn}^\pm \to Y. \]
\begin{corollary} \label{cor:exhaustive}
Let $X$ be smooth over a perfect field and suppose given a span \[ X \times \Gm \xleftarrow{p} Z \to Y \times \Gm \] with $p$ finite locally free.
Then for $N$ sufficiently large and $m,n>N$ the induced maps $Z_{mn}^\pm \to X \times \A^1$ are finite locally free.
\end{corollary}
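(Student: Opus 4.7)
The plan is to combine flatness, obtained by applying Remark~\ref{rmk:flatness-ext} in a base-changed setup, with a finiteness argument coming from an explicit monic integral relation. To begin, I base-change $p$ along $\A^1_k \to \Spec k$: the resulting morphism
\[
p \times \id_{\A^1}\colon Z \times \A^1 \to (X \times \A^1) \times \Gm
\]
is again finite locally free, and its base $X \times \A^1$ is smooth over the perfect field $k$, so Lemma~\ref{lemm:flatness} and Remark~\ref{rmk:flatness-ext} apply with ``base'' $X \times \A^1$ and ``$t$'' $= t_1$.

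For the $+$ case, for $m, n > N_0$ I would rewrite
\[
h_{mn}^+ \;=\; 1 - t_1^{N_0+1}\bigl((s-1)\,t_1^{m-N_0-1} + (-s)\,t_1^{n-N_0-1}\bigr),
\]
which fits the form required by Remark~\ref{rmk:flatness-ext} with $f_1 = s-1$ and $f_2 = -s$, both lying in $\mathcal{O}(Z \times \A^1)$. For $N_0$ sufficiently large (depending on $p$ together with these $f_i$), the Remark yields flatness of $Z_{mn}^+ \to X \times \A^1$. For the $-$ case, since $t_2 \in \mathcal{O}(Z)^{\times}$ we have $Z(h_{mn}^-) = Z(t_2^{-1} h_{mn}^-) = Z\bigl(1 + (1-s)\,t_1^m t_2^{-1} + s\,t_1^n t_2^{-1}\bigr)$, so the Remark applies with $f_1 = -(1-s)\,t_2^{-1}$ and $f_2 = -s\, t_2^{-1}$.

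For finiteness, multiply $h_{mn}^+$ by the unit $t_1^{-n}$ and set $u = t_1^{-1}$; this produces the monic relation
\[
u^n + (1-s)\,u^{n-m} + s \;=\; 0
\]
of degree $n$ over $\mathcal{O}(X \times \A^1)$, exhibiting $u$ as integral. A parallel manipulation handles the $-$ case. Combined with flatness, this gives the asserted finite locally freeness. The main obstacle is the finiteness step: the integral relation directly yields integrality only of $t_1^{-1}$, so one must leverage both this relation and the finite locally freeness of $p$ (which provides the finitely many generators of $\mathcal{O}(Z)$ over $\mathcal{O}(X)[t_1,t_1^{-1}]$) to conclude that the full coordinate ring $\mathcal{O}(Z_{mn}^{\pm})$ is in fact finitely generated over $\mathcal{O}(X \times \A^1)$.
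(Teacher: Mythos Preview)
Your flatness argument is correct and follows the same route as the paper: both apply Remark~\ref{rmk:flatness-ext} to the base-changed map $p \times \id_{\A^1}\colon Z \times \A^1 \to (X \times \A^1) \times \Gm$. The only difference is how $h_{mn}^\pm$ is massaged into the shape $1 - t_1^{n'}(f_1 t_1^a + f_2 t_1^b)$. You pull out $t_1^{N_0+1}$ with $N_0$ fixed, letting both inner exponents $m-N_0-1$ and $n-N_0-1$ vary; the paper instead pulls out $t_1^{\min(m,n)}$, writing (for $m = n+r \ge n$) $h_{mn}^+ = 1 - t_1^n\bigl(-(s + (1-s)t_1^{r})\bigr)$ and treating $m<n$ separately. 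In both cases the fixed pair $(f_1,f_2)$ is, up to sign, $(s,\,1-s)$ for the $+$ case and $(s\,t_2^{-1},\,(1-s)\,t_2^{-1})$ for the $-$ case, so the content is identical; your symmetric decomposition just avoids the case split.

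For finiteness the paper does not argue directly at all: it simply invokes Voevodsky's argument \cite[Lemma~4.1]{voevodsky2002cancellation}, as amplified in \cite[Lemma~4.20]{bachmann-notes}. Your attempt to substitute an explicit integral relation has the gap you flag, and it is a real one. The relation $u^n + (1-s)u^{n-m} + s = 0$ (for $n \ge m$) makes $u = t_1^{-1}$ integral over $\scr O(X \times \A^1)$, but finiteness of $\scr O(Z_{mn}^\pm)$ also requires $t_1$ itself to be integral, and no rearrangement of $s\,t_1^n + (1-s)\,t_1^m + 1 = 0$ produces a monic relation for $t_1$ over $\scr O(X \times \A^1)$: the top coefficient is $s$ or $1-s$, neither a unit on $\A^1$. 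Integrality of $t_1^{-1}$ together with finiteness of $\scr O(Z)$ over $\scr O(X)[t_1,t_1^{-1}]$ only shows that $\scr O(Z_{mn}^\pm)$ is a finite module over the image of $\scr O(X \times \A^1)[t_1]$, which is not yet finite over $\scr O(X \times \A^1)$. (The same obstruction appears in your ``parallel manipulation'' for the $-$ case: dividing by $t_2$ yields $u^n + (1-s)t_2^{-1}u^{n-m} + s\,t_2^{-1} = 0$, whose coefficients lie in $\scr O(Z)[s]$ rather than $\scr O(X \times \A^1)$.) The finiteness argument the paper cites is different in character---it controls $t_1$ near both $0$ and $\infty$ in $\P^1$, not just near $0$---and you should invoke those references rather than push the single integral relation further.
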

\begin{proof}
Voevodsky's original argument \cite[Lemma 4.1]{voevodsky2002cancellation}, explained in slightly more detail in \cite[Lemma 4.20]{bachmann-notes}, shows that $Z_{mn}^\pm \to X \times \A^1$ is finite for $m,n$ sufficiently large.
It thus remains to establish flatness.
Note that for $m=n+r \ge m$ we have \[ h_{mn}^+ = t_1^n(s + (1-s)t_1^r) + 1 \] and \[ h_{mn}^- = t_1^n(s + (1-s)t_1^r) + t_2. \]
The result thus follows via Remark \ref{rmk:flatness-ext} from Lemma \ref{lemm:flatness} applied to $Z \times \A^1 \xrightarrow{p \times \A^1} X \times \Gm \times \A^1$, with \[ f = -(s + (1-s)t_1^r) \quad\text{and}\quad f = -(s + (1-s)t_1^r)t_2^{-1}, \] respectively.
The case $m<n$ is treated similarly.
\end{proof}

Given any span $\alpha: X \times \Gm \leftarrow Z \to Y \times \Gm$ (with $Z \to X$ not necessarily finite locally free), denote by $\rho_{mn}^\pm(\alpha)$ the span $X \times \A^1 \leftarrow Z_{mn}^\pm \to Y$.
One checks immediately (see \cite[Lemma 4.17]{bachmann-notes} for a more conceptual explanation) that if $\beta: X' \to X$ and $\gamma: Y \to Y'$ are arbitrary spans, then \begin{equation} \label{eq:compat} \rho_{mn}^\pm(\gamma \circ \alpha) \wequi \gamma_* \rho_{mn}^\pm(\alpha) \quad\text{and}\quad \rho_{mn}^\pm(\alpha \circ \beta) \wequi \beta^*\rho_{mn}^\pm(\alpha). \end{equation}
Let $G = \mu_+(\Gm)$.
For $Y \in \Sm_k$, define \[ F_i \Omega_G \Sigma_G \mu_+(Y) \subset \Omega_G \Sigma_G \mu_+(Y) \] as the subpresheaf of those spans $X \times \Gm \leftarrow Z \to Y \times \Gm$ such that for all $m,n \ge i$ the maps $Z_{mn}^\pm \to X$ are finite locally free.
It follows from \eqref{eq:compat} that \[ \gamma_* (F_i \Omega_G \Sigma_G \mu_+ Y(X)) \subset F_i \Omega_G \Sigma_G \mu_+ Y'(X) \quad\text{and}\quad \beta^* F_i \Omega_G \Sigma_G \mu_+ Y(X) \subset \beta^* F_i \Omega_G \Sigma_G \mu_+ Y(X'). \]
In particular $F_i \Omega_G \Sigma_G \mu_+ Y$ defines a presheaf on $\Cor^\flf(k)$.
By construction we have \[ F_i \Omega_G \Sigma_G \mu_+ Y \subset F_{i+1} \Omega_G \Sigma_G \mu_+ Y \subset \dots, \] and by Corollary \ref{cor:exhaustive} we have \[ \colim_i F_i \Omega_G \Sigma_G \mu_+ Y \wequi \Omega_G \Sigma_G \mu_+ Y. \]
For $m,n \ge i$ define maps \[ \tilde\rho_{mn,i}^\pm: F_i \Omega_G \Sigma_G \mu_+ Y \to \Omega_{\A^1_+} \mu_+ Y \] via $Z \mapsto Z_{mn}$.
By construction $Z_{mn}$ is finite locally free over $X \times \A^1$, and by \eqref{eq:compat} $\tilde\rho_{mn,i}^\pm$ defines a morphism in $\PSh_\Sigma(\Cor^\flf(k))$.
By adjunction we obtain morphisms \[ \rho_{mn,i}^\pm: \mu_+(\A^1) \wedge F_i \Omega_G \Sigma_G \mu_+ Y \to \mu_+ Y; \] i.e. we obtain $\A^1$-homotopies between the various \[ \rho_m^\pm: F_i \Omega_G \Sigma_G \mu_+ Y \to \mu_+ Y \] for $m \ge i$ (obtained by restriction to $0$ or $1$).
By construction, for $m \ge j \ge i$, the following diagram commutes
\begin{equation*}
\begin{tikzcd}
F_i \Omega_G \Sigma_G \mu_+ Y \ar[r] \ar[d, "\rho_m"] & F_j \Omega_G \Sigma_G \mu_+ Y \ar[dl, "\rho_m"] \\
\mu_+ Y.
\end{tikzcd}
\end{equation*}
It follows that in the following diagram, all cells commute up to $\A^1$-homotopy
\begin{equation*}
\begin{CD}
F_1 \Omega_G \Sigma_G \mu_+ Y @>>> F_2 \Omega_G \Sigma_G \mu_+ Y @>>> \dots @>>> \Omega_G \Sigma_G \mu_+ Y \\
@V{\rho_1^\pm}VV                   @V{\rho_2^\pm}VV            \\
\mu_+ Y                       @=   \mu_+ Y @= \cdots.
\end{CD}
\end{equation*}
Consequently after motivic localization we obtain induced maps in the colimit \[ \rho^\pm: L_\mot \Omega_G \Sigma_G \mu_+ Y \wequi \colim_i L_\mot F_i \Omega_G \Sigma_G \mu_+ Y \to L_\mot \mu_+ Y. \]
Group-completing and taking the difference yields \[ \rho := \rho^+ - \rho^-: L_\mot^\gp \Omega_G \Sigma_G \mu_+ Y \to L_\mot^\gp \mu_+ Y. \]
\begin{theorem} \label{thm:cancellation-flf}
The category $\Cor^\flf(k)$ satisfies cancellation.
\end{theorem}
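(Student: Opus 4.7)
The plan is to apply the abstract cancellation criterion of Proposition \ref{prop:abstract-cancel} in the additive, symmetric monoidal homotopy $1$-category $\scr A := \mathrm{ho}(\Spc^{\scr C\gp})$, with $\scr C = \Cor^\flf(k)$. Take $G = \mu_+(\Gm)^\gp$; the basepoint $1 \in \Gm$ gives the retraction $\1 \xrightarrow{i} G \xrightarrow{p} \1$ and splitting $G \wequi \1 \oplus \bar G$ with $\bar G = \mu(\Gm)^\gp$. Let $\scr S$ be the closure of $\{\1\} \cup \{\mu_+(Y)^\gp \mid Y \in \Sm_k\}$ under $G \otimes (\ph)$; since $G \otimes \mu_+(Y)^\gp \wequi \mu_+(\Gm \times Y)^\gp$, every $X \in \scr S$ has the form $\mu_+(Y')^\gp$ for some $Y' \in \Sm_k$, and the map $\rho = \rho^+ - \rho^-$ constructed just above the theorem statement supplies the required data $\rho_X : \Omega_G \Sigma_G X \to X$ in $\scr A$. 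Since an isomorphism in $\scr A$ lifts to an equivalence in $\Spc^{\scr C\gp}$, verifying the four hypotheses of Proposition \ref{prop:abstract-cancel} will give that $\mu_+(Y)^\gp \to \Omega_{\bar G}\Sigma_{\bar G}\mu_+(Y)^\gp$ is an equivalence, which is precisely cancellation.

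Hypotheses (1), (2), and (4) should be largely formal. Hypothesis (4) holds because $\mu$ is symmetric monoidal and the cyclic permutation on $\Gm^{\wedge n}$ is motivically nullhomotopic for some fixed $n$, a standard input used in analogous cancellation arguments (see e.g.\ \cite{bachmann-criterion}). Hypotheses (1) and (2) are naturality statements for $\rho$ with respect to $G \otimes (\ph)$ and with respect to $i, p: \1 \adj G$, respectively; by the compatibilities \eqref{eq:compat} they reduce to direct inspection of the construction $Z \mapsto Z_{mn}^\pm$, which manifestly commutes with adjoining a trivial $\Gm$-factor to $Z$ and with specialization to or projection away from $1 \in \Gm$.

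The principal task, and the main obstacle, is hypothesis (3): that the composite
\[ \mu_+(Y)^\gp \xrightarrow{\bar u} \Omega_{\bar G}\Sigma_{\bar G}\mu_+(Y)^\gp \xrightarrow{\rho} \mu_+(Y)^\gp \]
is the identity in $\scr A$. Unwinding adjunctions, $\bar u$ is detected by the diagonal span $Y \times \Gm \xleftarrow{\Delta} Y \times \Gm \to Y \times \Gm$ regarded as an element of $\Omega_G \Sigma_G \mu_+(Y)(Y)$, so hypothesis (3) reduces to an explicit calculation of the subschemes $(Y \times \Gm)_{mn}^\pm \subset Y \times \Gm \times \A^1$ for this particular $Z$. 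I would carry this out by following Voevodsky's original argument \cite[Lemma 4.1]{voevodsky2002cancellation} (see also the exposition \cite[\S4]{bachmann-notes}): for $m,n$ sufficiently large, these loci split, up to $\A^1$-homotopy, as the graph of the identity section $Y \hookrightarrow Y \times \A^1$ together with auxiliary components that match between the $+$ and $-$ versions, so that $\rho^+ - \rho^-$ collapses to the identity span $Y \leftarrow Y \to Y$. The only adaptation over Voevodsky's setting is that $\Cor^\flf(k)$ is not $\Z$-linear, so the cancellation of the auxiliary components is not an honest cycle identity but only holds after group completion; since $\rho$ is by construction defined in the grouplike motivic category, this causes no difficulty.
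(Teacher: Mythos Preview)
Your approach is essentially the paper's: apply Proposition~\ref{prop:abstract-cancel} in the homotopy category of $\Spc^{\scr C\gp}$, verify (1), (2), (4) formally via \eqref{eq:compat} and symmetry of $\Gm$, and reduce (3) to an explicit span computation. One point you should make explicit is that $\rho$ was constructed as a map out of $L_\mot^\gp\Omega_G\Sigma_G\mu_+ Y$, whereas the abstract proposition needs it out of $\Omega_G\Sigma_G(L_\mot^\gp\mu_+ Y)$; the paper invokes Proposition~\ref{prop:Omega-Gm-mot-loc} here.

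There is one inaccuracy in your treatment of (3). The diagonal span $Y\times\Gm \xleftarrow{\Delta} Y\times\Gm \to Y\times\Gm$ represents the unit $u$ for $\Omega_G\Sigma_G$, not $\bar u$ for $\Omega_{\bar G}\Sigma_{\bar G}$. Under the inclusion $\Omega_{\bar G}\Sigma_{\bar G}\subset\Omega_G\Sigma_G$ one has (after group completion) $\bar u_\1(\id)=\id_G - p$, where $p:G\to *\to G$ is the idempotent onto the trivial summand. Thus the computation you describe establishes $\rho\circ u=\id$, while condition (3) asks for $\rho\circ\bar u=\id$; you also need $\rho(p)=0$, i.e.\ $\rho_n^+(p)=\rho_n^-(p)$. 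This is immediate from $g_n^+(t,1)=g_n^-(t,1)$, but it is a separate check that must be made (the paper isolates it as Lemma~\ref{lemm:cancel-final}(1)). With that addition your argument goes through; indeed the paper first reduces to $Y=*$ via the monoidal compatibility $\rho_i(u_Y(\alpha))\wequi\alpha\otimes\rho_i(u_\1(\id))$ and then computes exactly the two quantities $\rho_n^\pm(\id_G)$ and $\rho_n^\pm(p)$.
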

\begin{proof}
We shall apply Proposition \ref{prop:abstract-cancel} with $\scr C$ the homotopy category of $\Spc^\flf(k)^\gp$; thus $\bar G = \mu(\Gm)$.
This object is symmetric since $\Gm$ is symmetric in $\Spc^\fr(k)^\gp$, being semi-invertible there (see also \cite[Lemma 3.3.3]{EHKSY}).
Thus assumption (4) holds.
Since $L_\mot^\gp \Omega_G \Sigma_G \mu_+ Y \wequi \Omega_G \Sigma_G L_\mot^\gp \mu_+ Y$ by Proposition \ref{prop:Omega-Gm-mot-loc}, our map $\rho$ takes the required form.
Assumptions (1) and (2) already hold for all the $\rho_m^\pm$, i.e. before any localization; assumption (2) is a special case of \eqref{eq:compat} and assumption (1) is equally formal.\todo{details?}
It remains to verify (3).
The composite \[ \mu_+ Y \xrightarrow{u_Y} F_i \Omega_\Gm \Sigma_\Gm \mu_+ Y \xrightarrow{\rho_i} \mu_+ Y \] is easily checked to send a span $\alpha$ to $\alpha \otimes (\rho_i u_\1 \id)$.
Consequently $\rho \bar u = \id \otimes \rho(\bar u_\1(\id))$.
Since $\bar u_\1(\id) = \id_G - p$, where $p: G \to * \to G$ is the projector onto the trivial summand, the result follows from Lemma \ref{lemm:cancel-final} below.
\end{proof}

\begin{lemma} \label{lemm:cancel-final}
For each $n > 0$ we have
\begin{enumerate}
\item $\rho_n^+(p) = \rho_n^-(p)$, and
\item $\rho_n^+(\id_G) \stackrel{\A^1}{\wequi} \rho_n^-(\id_G) + \id_\1$.
\end{enumerate}
\end{lemma}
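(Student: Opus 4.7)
My plan is to unwind $p$ and $\id_G$ as explicit spans on $\Gm$ and compute $\rho_n^\pm$ by hand. Composing the two defining spans of the projector gives $p = (\Gm \xleftarrow{\id} \Gm \xrightarrow{1} \Gm)$, where the right leg is the constant map to $1 \in \Gm$, while by definition $\id_G = (\Gm \xleftarrow{\id} \Gm \xrightarrow{\id} \Gm)$. On the middle term $Z = \Gm$, the two $\Gm$-coordinates pull back to $(t_1,t_2) = (t,1)$ in the case of $p$ and $(t_1,t_2) = (t,t)$ in the case of $\id_G$.

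Part (1) now follows immediately: once $t_2 = 1$, we have $g_n^+|_Z = t^n + 1 = g_n^-|_Z$, so $\rho_n^+(p)$ and $\rho_n^-(p)$ cut out the identical closed subscheme $\Spec k[t]/(t^n+1) \subset \Gm$, with the same induced span structure, and hence are equal on the nose.

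For part (2), direct computation gives $\rho_n^+(\id_G) = \Spec k[t]/(t^n+1)$ and, using the factorisation $t^n + t = t(t^{n-1}+1)$ together with the invertibility of $t$ on $\Gm$, $\rho_n^-(\id_G) = \Spec k[t]/(t^{n-1}+1)$. To produce the $\A^1$-homotopy I linearly interpolate between the two defining polynomials: set
\[
W := \Spec\bigl(k[t,s]/(t^n + st + 1 - s)\bigr)
\]
and view it as a span $\A^1_s \leftarrow W \to *$. For $n \ge 2$ the polynomial $t^n + st + (1-s)$ is monic of degree $n$ in $t$, so $W \to \A^1_s$ is finite free of rank $n$; the span is thus a bona fide morphism in $\Cor^\flf(k)$, i.e.\ an element of $\mu_+(*)(\A^1)$. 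Its fibre at $s=0$ is $\Spec k[t]/(t^n+1) = \rho_n^+(\id_G)$, and its fibre at $s=1$ is $\Spec k[t]/(t(t^{n-1}+1))$. Since the ideals $(t)$ and $(t^{n-1}+1)$ in $k[t]$ are coprime (the second evaluates to $1$ at $t=0$), the Chinese remainder theorem splits this fibre as $\Spec k \sqcup \Spec k[t]/(t^{n-1}+1)$. Under the semiadditive structure on $\Cor^\flf(k)$, disjoint union of finite $k$-schemes corresponds to the sum of spans, so this fibre represents $\id_\1 + \rho_n^-(\id_G)$, and $W$ is the required $\A^1$-homotopy.

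The only creative step is guessing the interpolating polynomial $t^n + st + (1-s)$; once fixed, flatness drops out automatically from monicness in $t$, and the extra $\id_\1$ summand emerges organically as the ``extra root'' $t=0$ that appears in the $s=1$ fibre after the CRT splitting. The degenerate case $n=1$ must be handled separately, since $(1+s)t + (1-s)$ is no longer monic: in characteristic $\neq 2$ one checks by hand that $\rho_1^-(\id_G) = \emptyset$ and $\rho_1^+(\id_G) = \id_\1$ so (2) holds on the nose, while in characteristic $2$ the subscheme $Z(g_1^-|_Z) \subset Z$ is not finite over $*$, so $\rho_1^-(\id_G)$ does not define an element of $\mu_+(*)(*)$ and the statement does not apply.
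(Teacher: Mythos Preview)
Your argument is correct and follows essentially the same route as the paper: both compute $\rho_n^\pm$ on the explicit spans representing $p$ and $\id_G$, and both use the identical interpolating polynomial $t^n + st + (1-s)$ to produce the $\A^1$-homotopy, extracting the extra $\id_\1$ summand from the root $t=0$ appearing in the $s=1$ fibre. Your treatment of the degenerate case $n=1$ is actually more careful than the paper's, which glosses over the failure of monicness there; in the application only large $n$ matter, so this does not affect the theorem, but your caveat about characteristic $2$ is well observed.
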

\begin{proof}
This is essentially \cite[Lemma 4.3]{voevodsky2002cancellation}.

Note that $p$ is represented by the correspondence $G \xleftarrow{\wequi} G \xrightarrow{1} G$, so that by Construction, $\rho_n^\pm(p)$ is represented by $* \leftarrow Z(g_n^\pm(t,1)) \to *$
But $g_n^+(t,1) = g_n^-(t,1)$, whence (1).

Similarly $\rho_n^\pm(\id_G)$ is represented by $Z_\pm := Z(g_n^\pm(t,t))$, so $Z_+ = Z(t^n+1)$ and $Z_- = Z(t^n+t)$, where both $t^n+1, t^n+t$ are viewed as functions on $\A^1 \setminus 0$.
For a function $f: X \times \A^1 \to \A^1$ denote by $D(f)$ the span \[ X \leftarrow Z(f) \to *. \]
Consider the span $H = D(t^n + ts + 1-s)$, which is immediately verified to be finite flat.
Then $H$ provides an $\A^1$-homotopy between $D(t^n+1)$ and $D(t^n+t)$, where this time we view $t^n+1, t^n+t$ as functions on $\A^1$.
Now \[ Z(t^n+1|\A^1) = Z(t^n+1|\A^1 \setminus 0) = Z^+, \] whereas \[ Z(t^n+t|\A^1) = Z(t^n+t|\A^1 \setminus 0) \coprod \{0\} = Z^- \coprod \{0\}. \]
Since $* \leftarrow \{0\} \to *$ is the identity correspondence, $H$ provides the desired homotopy.

This concludes the proof.
\end{proof}

\subsection{Rational contractibility}
\begin{proposition}\label{prop:ratcontractible}
Let $X$ be a smooth connected scheme over $k$ and $x_0\in X$ be a rational point of $X$. Assume that there exists an open subscheme $W\subset X\times \A^1$ containing $(X\times \{0,1\})\cup (x_0\times \A^1)$ and a morphism of schemes $f:W\to X$ such that $f|_{X\times 0}=x_0$, $f|_{X\times 1}=\id_X$ and $f|_{x_0\times \A^1}=x_0$.
Then $\hflf{X,x_0}$ is rationally contractible (as a presheaf of monoids).
\end{proposition}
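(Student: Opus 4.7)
The plan is to mimic Suslin's construction of rational contractibility for finite correspondences (adapted to the monoid-valued setting of Appendix \ref{app:ratl-contr}), using $f \colon W \to X$ as the geometric homotopy. The three hypotheses on $f$ provide, respectively, the value $\id_X$ at $t = 1$, the constant value $x_0$ at $t = 0$ (which will make the contraction zero there in $\hflf{X,x_0}$), and pointedness of the contraction (basepoint correspondences going to basepoint correspondences).

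Concretely, given a finite locally free correspondence $\alpha \colon U \leftarrow Z \xrightarrow{g} X$, I would set $Z' := (g \times \id_{\A^1})^{-1}(W) \subset Z \times \A^1$; this open contains $Z \times \{0,1\}$ and $g^{-1}(x_0) \times \A^1$. Let $p \colon Z \times \A^1 \to U \times \A^1$ be the (finite, hence closed) projection and put $V := (U \times \A^1) \setminus p(Z \times \A^1 \setminus Z')$. Then $V$ is open in $U \times \A^1$, contains $U \times \{0, 1\}$, and $Z_V := p^{-1}(V)$ is finite locally free over $V$ with $Z_V \subset Z'$. Composing $Z_V \hookrightarrow Z' \xrightarrow{g \times \id} W \xrightarrow{f} X$ with $Z_V \to V$ defines a finite locally free correspondence $\tilde\alpha \in \hflf{X}(V)$. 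Since $f|_{X \times 1} = \id_X$ we have $\tilde\alpha|_{U \times 1} = \alpha$; since $f|_{X \times 0} = x_0$, the restriction $\tilde\alpha|_{U \times 0}$ has right leg constant at $x_0$, so it is zero in $\hflf{X, x_0}$; and when $\alpha$ itself is the basepoint (right leg $x_0$), the hypothesis $f|_{x_0 \times \A^1} = x_0$ forces $\tilde\alpha$ to again be the basepoint, as in that case $V = U \times \A^1$ and the right leg is everywhere $x_0$.

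The assignment $\alpha \mapsto \tilde\alpha$ is visibly natural in $U$ (base change for finite locally free maps commutes with the formation of $V$ and $Z_V$) and additive in $\alpha$ (disjoint unions of $Z$'s produce disjoint unions of $Z_V$'s over the intersection of the corresponding $V$'s), so it defines a morphism of pointed presheaves of commutative monoids from $\hflf{X, x_0}$ into the ``partial-$\A^1$'' presheaf of correspondences defined over open neighborhoods of $U \times \{0,1\}$. The main technical task, modulo the geometric construction above, is bookkeeping: matching the output precisely with the formal definition of rational contractibility in Appendix \ref{app:ratl-contr}, essentially by following Suslin's original argument with the small adjustments needed for the monoid-valued, non-$\Z$-linear setting—the same kind of adjustments already made throughout \S\ref{sec:formalism}.
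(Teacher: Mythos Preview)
Your proposal is correct and follows essentially the same approach as the paper: the geometric construction of $\tilde\alpha$ from $\alpha$ via $f$ is identical to the paper's construction of $s(\alpha)$, only with different variable names. The paper organizes the same verifications by first building a commutative square of unpointed presheaves $h^{\flf}_+(x_0)\to h^{\flf}_+(X)$ with the $\hat C_1$-maps and then passing to the monoid quotient, whereas you check pointedness and the boundary conditions directly; modulo a harmless $0$/$1$ swap in the conventions, the arguments coincide.
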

\begin{proof}
We follow closely Suslin's proof in \cite[Proposition 2.2]{suslin2003grayson}.
We shall construct a commutative diagram in $\CMon(\PSh(\Sm_k))$
\begin{equation*}
\begin{CD}
\hflfp{x_0} @>s_0>> \ratC \hflfp{x_0} \\
@VjVV              @VVV             \\
\hflfp{X}   @>s>>   \ratC \hflfp{X}
\end{CD}
\end{equation*}
such that $i_0^* s_0 \wequi \id_{x_0} \wequi i_1^* s_0$, $i_0^*s \wequi \id_X$, and $i_1^*s$ factors through $j$.
Since $\ratC$ commutes with colimits of monoids (it commutes with finite products, i.e. coproducts of monoids, and sifted colimits) the above diagram induces \[ \bar s: \hflf{X,x_0} \to \ratC \hflfp{X} \sslash \ratC \hflfp{x_0} \wequi \ratC(\hflf{X,x_0}), \] where $\sslash$ means quotient as presheaf of commutative monoids.
By construction, this exhibits $\hflf{X,x_0}$ as rationally contractible.

The morphism $s$ is constructed as follows.
Write $V \subset X \times \A^1$ for the closed complement of $W$.
Given $Y \in \Sm_k$ and a correspondence \[ \alpha = (Y \xleftarrow{p} Z \xrightarrow{q} X) \in \hflfp{X}(Y), \] denote by $p, q$ also the maps $Z \times \A^1 \to Y \times \A^1$ and $Z \times \A^1 \to X \times \A^1$.
Let $V' = q^{-1}(V) \subset Z \times \A^1$, $V'' = p(V')$, $U = Y \times \A^1 \setminus V''$ and $W' = p^{-1}(U)$.
Note that $V'$ does not contain any point above $0$ or $1$ and is closed.
Since $p$ is finite, $V''$ is also closed and contains no point above $0$ or $1$.
Thus $U$ is open and contains $Y \times \{0,1\}$.
Moreover $W' \subset Z \times \A^1 \setminus V' = q^{-1}(W)$.
There is thus a well-defined correspondence \[ s(\alpha) = (U \leftarrow W' \to W \xrightarrow{f} X) \in \ratC \hflfp{X}(Y). \]
This assignment is readily promoted to a morphism of presheaves.
The morphism $s_0$ just sends $Y \leftarrow Z \to x_0$ to $Y \times \A^1 \leftarrow Z \times \A^1 \to Z \to x_0$.
The required commutativity and factorization are readily established.\NB{give more details using Grothendieck construction etc?}
\end{proof}

\begin{proposition} \label{prop:flf-ratl-contr}
$\Cor^\flf(k)$ satisfies rational contractibility.
\end{proposition}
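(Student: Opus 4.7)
The plan is to reduce the claim, via the machinery already assembled, to a concrete construction of an elementary $\A^1$-homotopy on $(\A^1 \setminus 0)^{\times n}$, in the spirit of Suslin.

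First, by Definition \ref{def:C-satisfies-ratl-contr}, we must show that $h^\flf(\Gmp{n})^\gp$ is rationally contractible for each $n>0$. Using Remark \ref{rmk:ratl-contr-simplified}, it suffices to establish rational contractibility (as a presheaf of commutative monoids) of $h^\flf((\A^1 \setminus 0)^{\times n}, x_0)$, where $x_0 = (1, 1, \dots, 1)$. Once this is in place, Lemma \ref{lemm:ratl-contr-gp} promotes the statement to the group completion, and then Example \ref{ex:retract-contr} transfers it along the retract $h^\flf(\Gmp{n})^\gp \hookrightarrow h^\flf((\A^1 \setminus 0)^{\times n}, x_0)^\gp$.

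Second, I would invoke Proposition \ref{prop:ratcontractible} with $X = (\A^1 \setminus 0)^{\times n}$ and base point $x_0 = (1, \dots, 1)$. What is required is an open subscheme $W \subset X \times \A^1$ containing $(X \times \{0,1\}) \cup (x_0 \times \A^1)$, together with a morphism $f \colon W \to X$ restricting to the constant map $x_0$ on $X \times 0$, to $\id_X$ on $X \times 1$, and to $x_0$ on $x_0 \times \A^1$.

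Third, I would build this by the coordinate-wise linear interpolation. On $X \times \A^1$ with coordinates $(t_1, \dots, t_n, s)$, consider the morphism
\[ \tilde f(t_1, \dots, t_n, s) = (s t_1 + (1-s),\ \dots,\ s t_n + (1-s)). \]
Let $W \subset X \times \A^1$ be the open subscheme on which each coordinate $s t_i + (1-s)$ is invertible; then $\tilde f$ restricts to a morphism $f \colon W \to (\A^1 \setminus 0)^{\times n} = X$. Verifying the three boundary/basepoint conditions is immediate: at $s=0$ each coordinate of $\tilde f$ is $1$; at $s=1$ it is $t_i$, which is invertible on $X$; and at $t_i=1$ for all $i$, each coordinate is $s \cdot 1 + (1-s) = 1$. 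In particular $W$ contains $X \times \{0,1\}$ and $x_0 \times \A^1$, and $f$ has the desired restrictions. Applying Proposition \ref{prop:ratcontractible} therefore yields the rational contractibility of $h^\flf((\A^1 \setminus 0)^{\times n}, x_0)$, and combining with the reduction above completes the proof.

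There is no real obstacle here: all of the work has been isolated in Proposition \ref{prop:ratcontractible} (the Suslin-type construction of the contracting homotopy at the level of correspondences) and Remark \ref{rmk:ratl-contr-simplified} (the passage from the pointed affine model to $\Gmp{n}$). The only content specific to this proposition is exhibiting the rational $\A^1$-contraction of $((\A^1\setminus 0)^{\times n}, x_0)$, which is the elementary linear interpolation above.
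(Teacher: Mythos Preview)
Your proof is correct and follows exactly the paper's approach: reduce via Remark \ref{rmk:ratl-contr-simplified} to the rational contractibility of $h^\flf((\A^1\setminus 0)^{\times n},x_0)$ as a presheaf of monoids, then apply Proposition \ref{prop:ratcontractible} using the linear interpolation $f(t_1,\dots,t_n,s)=s(t_1,\dots,t_n)+(1-s)x_0$ on the open locus $W$ where all coordinates are invertible. The paper's proof is terser but identical in content.
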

\begin{proof}
By Remark \ref{rmk:ratl-contr-simplified}, it suffices to show that $\hflf{(\A^1 \setminus 0)^{\times n},x_0}$ is rationally contractible as a presheaf of monoids.
As usual this follows from Proposition \ref{prop:ratcontractible} by taking $W$ to be defined by $ut_i + (1-u) \ne 0$ and $f(t_1, \dots, t_n, u) = u(t_1, \dots, t_n) + (1-u)x_0$.
\end{proof}

\section{Applications} \label{sec:applications}
\begin{theorem} \label{thm:main}
Let $k$ be a perfect field.
The functor \[ \Spc^\flf(k)^\gp \to \SH^\flf(k) \] is fully faithful.
\end{theorem}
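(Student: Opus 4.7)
The plan is to observe that this is an immediate application of the abstract machinery already assembled in Section \ref{sec:formalism}. Specifically, Proposition \ref{prop:cancellation} states that whenever a motivic category of correspondences $\scr C$ satisfies cancellation, both arrows in the composite $\Spc^{\scr C\gp} \to \SHSC \to \SH^\scr{C}$ are fully faithful. So the only thing that needs checking is that $\Cor^\flf(k)$ is indeed such a category and that it satisfies cancellation.

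Compatibility with the Nisnevich topology (and hence the verification that $\mu: \Cor^\fr(k) \to \Cor^\flf(k)$ is a motivic category of correspondences in the sense of Definition \ref{def:mot-correspondences}) was checked at the start of Section \ref{sec:ffl}. The cancellation property itself is the content of Theorem \ref{thm:cancellation-flf}, which was established by verifying the hypotheses of the abstract Proposition \ref{prop:abstract-cancel}, with the key geometric input being Corollary \ref{cor:exhaustive} on flatness of the intersections $Z_{mn}^\pm \to X \times \A^1$ for $m,n$ large.

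Thus the proof would simply read: by Theorem \ref{thm:cancellation-flf}, $\Cor^\flf(k)$ satisfies cancellation, and hence the result follows by applying Proposition \ref{prop:cancellation} to $\scr C = \Cor^\flf(k)$. There is no real obstacle at this stage; all the work has already been done. The only point deserving a brief remark is that the full faithfulness passes through the intermediate $S^1$-stable category $\SHSC$, but this is already built into the conclusion of Proposition \ref{prop:cancellation} and needs no additional argument.
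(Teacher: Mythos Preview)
Your proposal is correct and matches the paper's proof exactly: the paper's proof of Theorem \ref{thm:main} is the single sentence ``Immediate from Theorem \ref{thm:cancellation-flf} and Proposition \ref{prop:cancellation}.'' Your additional remarks about the intermediate $S^1$-stable category and the role of Corollary \ref{cor:exhaustive} are accurate context but not needed for the proof itself.
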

\begin{proof}
Immediate from Theorem \ref{thm:cancellation-flf} and Proposition \ref{prop:cancellation}.
\end{proof}

Write \[ \mu: \SH(k) \adj \SH^\flf(k): \mu^* \] for the canonical adjunction.
Recall from \cite[\S5]{hoyois2020hilbert} the motivic spectrum $\kgl$ of (very) effective algebraic $K$-theory.
\begin{theorem} \label{thm:present-kgl}
Let $k$ be a perfect\NB{not necessary} field.
We have $\mu^*(\1_{\SH^\flf(k)}) \wequi \kgl$, and this spectrum is presented as a $\P^1$-$\Omega$-spectrum via \[ \kgl \wequi (L_\mot \hflfp{*}^\gp, L_\mot \hflf{\P^1}, L_\mot \hflf{(\P^1)^{\wedge 2}}, \dots). \]
\end{theorem}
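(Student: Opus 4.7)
The plan is to combine the two main structural results just established for $\Cor^\flf(k)$ with the abstract framework of Section~\ref{sec:formalism} and the Hilbert-scheme description of $\kgl$ from \cite{hoyois2020hilbert}. By Theorem~\ref{thm:cancellation-flf}, $\Cor^\flf(k)$ satisfies cancellation, and by Proposition~\ref{prop:flf-ratl-contr} it satisfies rational contractibility; these are precisely the hypotheses needed to feed into Corollaries~\ref{cor:representing-spectrum} and~\ref{cor:identify-unit}.

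First I would dispose of the $\P^1$-$\Omega$-spectrum formula. Applying Corollary~\ref{cor:representing-spectrum} to $\scr C = \Cor^\flf(k)$ (allowed by Theorem~\ref{thm:cancellation-flf}) yields immediately
\[
\mu^*(\1_{\SH^\flf(k)}) \wequi \bigl(L_\mot \hflfp{*}^\gp,\; L_\mot \hflf{\P^1},\; L_\mot \hflf{(\P^1)^{\wedge 2}},\dots\bigr),
\]
which is the desired presentation; in particular once the first statement $\mu^*(\1_{\SH^\flf(k)})\wequi\kgl$ is known, this presentation transfers to $\kgl$.

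Next I would prove the identification $\mu^*(\1_{\SH^\flf(k)})\wequi\kgl$. Using cancellation and rational contractibility, Corollary~\ref{cor:identify-unit} produces an equivalence
\[
\mu^*(\1_{\SH^\flf(k)}) \wequi \Sigma^\infty_\fr\, \mu^*(\mu_+(*)).
\]
Unwinding the definitions, $\mu^*(\mu_+(*)) \in \PSh_\Sigma(\Sm_k)$ is the presheaf sending a smooth $k$-scheme $X$ to the groupoid of spans $X\leftarrow Z\to \Spec k$ with $Z\to X$ finite locally free, i.e.\ the ``Hilbert'' presheaf $\mathrm{FFlat}$ parametrizing finite locally free schemes over $X$. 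The remaining step is then to invoke the main theorem of \cite{hoyois2020hilbert}, which (after group completion) identifies $\Sigma^\infty_\fr \mathrm{FFlat}$ with the effective $K$-theory spectrum $\kgl$.

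The main obstacle is the last identification: one must check that the presheaf $\mu^*(\mu_+(*))$ coming out of our formalism is indeed the same $\mathrm{FFlat}$-type object whose framed suspension spectrum is computed in \cite{hoyois2020hilbert}. This is essentially bookkeeping --- both sides represent, sectionwise, the groupoid of finite locally free $X$-schemes, and the framed transfers on both sides are the canonical ones coming from the factorization $\Cor^\fr\to\Cor^\fsyn\to\Cor^\flf$ described at the start of Section~\ref{sec:ffl} --- but it is the one point in the argument where one has to compare two a priori different constructions rather than apply a black-box result of Section~\ref{sec:formalism}.
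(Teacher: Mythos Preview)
Your proposal is correct and follows essentially the same route as the paper: invoke Corollary~\ref{cor:representing-spectrum} (via Theorem~\ref{thm:cancellation-flf}) for the $\P^1$-$\Omega$-spectrum presentation, then Corollary~\ref{cor:identify-unit} (via Proposition~\ref{prop:flf-ratl-contr}) to reduce to $\Sigma^\infty_\fr$ of $\mu^*(\mu_+(*)) = \hflfp{*} = \scr F\!\scr F\mathrm{lat}_k$, and finally cite \cite[Theorem~5.4]{hoyois2020hilbert}. The only remark is that $\mu^*(\mu_+(*))$ lives in $\Spc^\fr(k)$ rather than $\PSh_\Sigma(\Sm_k)$, and the ``bookkeeping'' comparison you flag is handled in the paper simply by the tautology $\hflfp{*} = \scr F\!\scr F\mathrm{lat}_k$.
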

\begin{proof}
It is immediate from Theorem \ref{thm:cancellation-flf} and Corollary \ref{cor:representing-spectrum} that $\mu^*(\1)$ has a presentation as claimed.
Moreover by Proposition \ref{prop:flf-ratl-contr} and Corollary \ref{cor:identify-unit} we have $\mu^*(\1) \wequi \Sigma^\infty_\fr \hflfp{*}$.
But by construction $\hflfp{*} = \scr F\!\scr F\mathrm{lat}_k$, so the remaining claim follows from \cite[Theorem 5.4]{hoyois2020hilbert}.
\end{proof}

\begin{corollary}
Let $k$ be perfect of exponential characteristic $e$.
We have \[ \SH^\flf(k)[1/e] \wequi \kgl[1/e]\Mod. \]
\end{corollary}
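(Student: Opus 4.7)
The plan is to identify $\SH^\flf(k)[1/e]$ with the category of modules over its unit viewed inside $\SH(k)[1/e]$, using the adjunction $\mu: \SH(k) \adj \SH^\flf(k): \mu^*$ together with Theorem \ref{thm:present-kgl}.

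First, I would upgrade $\mu^*$: since $\mu^*$ is lax symmetric monoidal and $\mu^*(\1_{\SH^\flf(k)}) \wequi \kgl$ (Theorem \ref{thm:present-kgl}), the functor $\mu^*$ lifts canonically to a functor $\tilde\mu^*: \SH^\flf(k) \to \kgl\Mod$, with left adjoint $\tilde\mu: \kgl\Mod \to \SH^\flf(k)$ satisfying $\tilde\mu(\kgl \wedge E) \wequi \mu(E)$. Inverting $e$ everywhere, it suffices to show that $\tilde\mu$ becomes an equivalence after inverting $e$.

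Next I would verify the two Barr–Beck hypotheses for $\mu^*$ (hence for $\tilde\mu^*$). Conservativity is automatic: $\SH^\flf(k)$ is compactly generated by $\{\Sigma^\infty\mu_+(X)\}_{X\in\Sm_k}$, which lie in the image of $\mu$, so $\mu^* F \wequi 0$ forces $\Map(\mu(\Sigma^\infty X_+), F) \wequi \Map(\Sigma^\infty X_+, \mu^* F) \wequi 0$ for all $X$, hence $F \wequi 0$. Colimit preservation follows because $\mu$ sends a set of compact generators of $\SH(k)$ to compact generators of $\SH^\flf(k)$, so $\mu^*$ preserves filtered colimits; combined with stability this gives all colimits. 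Hence $\mu^*$ is monadic, and it remains to identify the monad $\mu^*\mu$ with $\kgl\wedge(-)$, i.e.\ to establish the projection formula $\mu^*(\mu(E)\wedge F)\wequi E\wedge \mu^*(F)$ after inverting $e$.

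The main obstacle is this projection formula. For $E$ strongly dualizable it is formal, and in particular it holds for $E=\Sigma^\infty X_+$ with $X$ smooth projective. The standard way to propagate this to all $E$ is to note that, on both sides, the statement is colimit-preserving in $E$, so it suffices to generate $\SH(k)[1/e]$ under colimits from strongly dualizable objects. This is exactly where inverting $e$ enters: by Gabber's prime-to-$e$ alterations one may reduce $\Sigma^\infty X_+[1/e]$, for $X$ smooth, to a colimit (or retract of a colimit) of smooth projective summands, all of which are dualizable. I would therefore spell out this reduction, conclude the projection formula on $\SH(k)[1/e]$, deduce that the monad on $\SH(k)[1/e]$ associated to the adjunction is $\kgl[1/e]\wedge(-)$, and finally apply Barr–Beck–Lurie to obtain
\[ \SH^\flf(k)[1/e] \wequi \kgl[1/e]\Mod(\SH(k)[1/e]) \wequi \kgl[1/e]\Mod. \]
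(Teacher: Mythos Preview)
Your proposal is correct and follows essentially the same route as the paper: the paper's one-line proof invokes ``compact-rigid generation'' of $\SH(k)[1/e]$ (i.e.\ dualizability of the generators $\Sigma^\infty X_+[1/e]$ via Gabber's alterations, exactly as you use it) together with Theorem~\ref{thm:present-kgl}, and defers the formal deduction to \cite[Lemma 5.3]{bachmann-criterion}, whose proof is precisely the Barr--Beck/projection formula argument you spell out. In other words, you have unpacked the cited lemma rather than taken a different path.
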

\begin{proof}
This is a formal consequence of compact-rigid generation (which is why we invert $e$ \cite[Corollary B.2]{levine2013algebraic}) and Theorem \ref{thm:present-kgl}; see e.g. the proof of \cite[Lemma 5.3]{bachmann-criterion}.
\end{proof}

\appendix
\section{Rational contractibility} \label{app:ratl-contr}
We review Suslin's notion of rational contractibility \cite[\S2]{suslin2003grayson} and extend the basic properties of this notion to presheaves of spaces.
All results in this section are essentially straightforward reformulations of Suslin's.
Throughout $k$ denotes a field (not necessarily perfect).

For a presheaf $F$ (of spaces) on $\Sm_k$, define a new presheaf $\ratC F$ on $\Sm_k$ by \[ (\ratC F)(X) = \colim_{X \times \{0,1\} \subset U \subset X \times \A^1} F(U); \] here the colimit is over open subschemes of $X \times \A^1$.
Note that pullback to $0$ or $1$ defines two maps of presheaves \[ i_0^*, i_1^*: \ratC F \to F. \]
\begin{definition} \label{def:ratl-contr}
A presheaf $F$ is called \emph{rationally contractible} if there is a map $s: F \to \ratC F$ such that $i_0^* s \wequi \id_F$ and $i_1^* s$ is constant (i.e. factors through the terminal presheaf $*$).
\end{definition}

\begin{example} \label{ex:contr-rat-contr}
$F$ is called $\A^1$-contractible if there is a map $H: \A^1 \times F \to F$ such that $i_0^* H \wequi \id_F$ and $i_1^* H$ is constant.
Equivalently there is a map $H': F \to \iMap(\A^1, F)$ with $i_0^* H' \wequi \id_F$ and $i_1^* H'$ constant.
Since there is a canonical map $\iMap(\A^1, F) \to \ratC F$, we see that $\A^1$-contractible presheaves are rationally contractible.
\end{example}

\begin{example} \label{ex:product-contr}
We have $\ratC(F \times G) \wequi \ratC(F) \times \ratC(G)$.
It follows that if $F, G$ are rationally contractible then so is $F \times G$.
\end{example}

\begin{example} \label{ex:retract-contr}
Consider a retraction $G \xrightarrow{\alpha} F \xrightarrow{\beta} G$ and let $s: F \to \ratC F$ exhibit $F$ as rationally contractible.
Then one easily checks that $G \xrightarrow{\alpha} F \xrightarrow{s} \ratC F \xrightarrow{\ratC \beta} \ratC G$ exhibits $G$ as rationally contractible.
In other words, rationally contractible presheaves are stable under retracts.
\end{example}

Write $\hat\Delta^\bullet = \hat\Delta^\bullet_k$ for the standard cosimplicial semilocal scheme \cite[\S5.1]{levine2008homotopy}.
Recall that the category $\EssSm_k$ embeds into the category of pro-objects in $\Sm_k$ \cite[Proposition 8.13.5]{EGAIV}.
Thus $F(\hat\Delta^\bullet)$ makes sense and is a simplicial space, and we write $|F(\hat\Delta^\bullet)|$ for its geometric realization.

\begin{lemma} \label{lemm:ratr-contr-real}
Let $F$ be rationally contractible.
Then $|F(\hat\Delta^\bullet)|$ is contractible.
\end{lemma}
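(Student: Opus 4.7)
The plan is to promote the rational contraction $s : F \to \ratC F$ to an explicit simplicial homotopy on the simplicial space $F(\hat\Delta^\bullet)$ between the identity and a constant map; taking geometric realization then yields the desired contractibility.

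First, for each $n \ge 0$ and each $0 \le i \le n$, I would introduce the standard algebraic prism map $h_i^n : \hat\Delta^{n+1} \to \hat\Delta^n \times \A^1$ realizing the $i$-th non-degenerate $(n+1)$-simplex in the combinatorial decomposition $|\Delta^n \times \Delta^1| = \bigcup_{i=0}^n |\Delta^{n+1}|$. Each $h_i^n$ sends the $n+2$ closed points of $\hat\Delta^{n+1}$ into $\hat\Delta^n \times \{0,1\}$, and the collection $\{h_i^n\}$ is compatible with the cosimplicial structure of $\hat\Delta^\bullet$ in the standard prism manner. The key input is the elementary fact that in a semi-local scheme any open subset containing every closed point equals the whole scheme; applied to $\hat\Delta^{n+1}$, this shows that $(h_i^n)^{-1}(U) = \hat\Delta^{n+1}$ for every open $U \subset \hat\Delta^n \times \A^1$ containing $\hat\Delta^n \times \{0,1\}$. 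Consequently each $h_i^n$ factors through every such $U$, and the induced maps $F(U) \to F(\hat\Delta^{n+1})$ assemble into a natural map $(h_i^n)^* : (\ratC F)(\hat\Delta^n) \to F(\hat\Delta^{n+1})$.

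Next, I would set $\Phi_i^n := (h_i^n)^* \circ s : F(\hat\Delta^n) \to F(\hat\Delta^{n+1})$ and verify, using naturality of $s$ together with the prism compatibilities of the $h_i^n$, that the collection $\{\Phi_i^n\}_{n,i}$ constitutes the prism data of a simplicial homotopy on $F(\hat\Delta^\bullet)$ between two endomorphisms. The hypotheses $i_0^* s \wequi \id_F$ and ``$i_1^* s$ is constant'' translate respectively into the statements that one endpoint of this homotopy is the identity and the other factors through the terminal presheaf. Passing to geometric realization then yields a homotopy between $\id_{|F(\hat\Delta^\bullet)|}$ and a constant map, as required.

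The hardest step will be producing the simplicial homotopy with full coherence. Since $F$ takes values in spaces rather than sets and $(\ratC F)(\hat\Delta^n)$ is a filtered colimit of spaces, the maps $\Phi_i^n$ are not canonically induced from a single set-theoretic representative. The required coherence comes from the observation that, by the semi-local argument, the cosimplicial objects $\{h_i^n\}$ factor compatibly through \emph{every} open in the filtered diagram defining $\ratC F$; this functoriality in the entire colimit diagram allows one to promote the pointwise $\Phi_i^n$ to a genuine coherent simplicial homotopy. Once this is in place, the boundary identifications and the realization step are formal.
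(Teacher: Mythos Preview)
Your proof is correct and rests on the same idea as the paper's: semilocality of $\hat\Delta^{n+1}$ makes the prism maps $h^n_i\colon \hat\Delta^{n+1} \to \hat\Delta^n \times \A^1$ factor through every open $U \supset \hat\Delta^n \times \{0,1\}$, so the rational contraction $s$ upgrades to a simplicial contracting homotopy on $F(\hat\Delta^\bullet)$. The difference lies only in how the coherence you flag as hardest is handled. Rather than verifying the simplicial-homotopy identities for the $\Phi^n_i$ directly, the paper first observes that $(\ratC F)(\hat\Delta^n) \wequi F(\hat\Delta^1 \hat\times \hat\Delta^n)$, where $\hat\Delta^1 \hat\times \hat\Delta^n$ denotes the semilocalization of $\A^{n+1}$ at the $2(n+1)$ vertices (this is where semilocality is used). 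Writing $r$ also for the left Kan extension of $[n] \mapsto \hat\Delta^n$ to presheaves, the paper then produces a single chain of morphisms of simplicial spaces
\[
r^*F \xrightarrow{\,r^*s\,} r^*\ratC F \wequi F(\hat\Delta^1 \hat\times \hat\Delta^\bullet) \longrightarrow r(\Delta^1 \times \Delta^\bullet)^*F \wequi \iMap(\Delta^1, r^*F),
\]
the middle arrow coming from the canonical map $r(\Delta^1 \times \Delta^n) \to \hat\Delta^1 \hat\times \hat\Delta^n$. Your $h^n_i$ are exactly the components of this last map on the nondegenerate $(n{+}1)$-simplices of $\Delta^1 \times \Delta^n$, so the two arguments coincide; the paper's formulation simply keeps everything at the level of morphisms of simplicial objects, making the $\infty$-categorical coherence automatic rather than something to be checked by hand.
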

\begin{proof}
Write $r: \Delta \to \EssSm_k$ for the cosimplicial object $\hat\Delta^\bullet$, and also for the left Kan extension $\PSh(\Delta) \to \PSh(\EssSm_k)$.
There are canonical maps \[ r(\Delta^1 \times \Delta^n) \to r(\Delta^1) \times r(\Delta^n) = \hat\Delta^1 \times \hat\Delta^n \to \hat\Delta^1 \hat\times \hat\Delta^n =: r'(\Delta^n), \] where $\hat\Delta^1 \hat\times \hat\Delta^n$ denotes the semilocalization of $\A^1 \times \A^n$ in the vertices.
This way we obtain a morphism of simplicial spaces\NB{first equivalence slightly non-trivial!} \[ s': r^*F \xrightarrow{r^*s} r^*\ratC F \wequi r'^*F \to r(\Delta^1 \times \Delta^\bullet)^*(F) \wequi \iMap(\Delta^1, r^*F). \]
By construction $i_0^*s' \wequi \id_{r^*F}$ and $i_1^*s'$ is constant; thus $r^*F \wequi F(\hat\Delta^\bullet)$ is simplicially contractible and hence $|r^*F| \wequi |F(\hat\Delta^\bullet)|$ is contractible.\footnote{Indeed by adjunction we obtain $H: \Delta^1 \times r^*F \to r^*F$, and since geometric realization commutes with finite products (being a sifted colimit \cite[Lemma 5.5.8.11, Remark 5.5.8.11]{lurie-htt}), $|H|: |\Delta^1 \times r^*F| \wequi |\Delta^1| \times |r^*F| \to |r^*F|$ is a contracting homotopy.}
\end{proof}

\begin{lemma} \label{lemm:basechange-ratl-contr}
Let $p: \Spec(K) \to \Spec(k)$ be a separable (not necessarily algebraic) field extension.
Then for $F \in \PSh(\Sm_k)$ we have $p^* \ratC F \wequi \ratC p^* F$.
In particular if $F$ is rationally contractible then so is $p^* F$.
\end{lemma}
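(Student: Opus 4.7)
The plan is to reduce the statement to a spreading-out / cofinality argument. Since $K/k$ is separable, $\Spec(K)$ is essentially smooth over $\Spec(k)$, i.e.\ $\Spec(K) = \lim_\alpha V_\alpha$ with $V_\alpha \in \Sm_k$ affine and étale transition maps. Consequently every $Y \in \Sm_K$ spreads out: we may write $Y = \lim_\alpha Y_\alpha$ with $Y_\alpha \in \Sm_k$, so that $(p^* G)(Y) = \colim_\alpha G(Y_\alpha)$ for any presheaf $G$ on $\Sm_k$, by the standard extension-to-essentially-smooth-schemes formula.

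The core step is to identify the category of open neighbourhoods appearing in the definition of $\ratC$ after base change. Concretely, every open $U \subset Y \times \A^1_K$ containing $Y \times \{0,1\}$ is of finite presentation over $Y \times \A^1_K = \lim_\alpha(Y_\alpha \times \A^1_k)$, so by the limit formalism (e.g.\ \cite[Tag 01ZM / 081C]{stacks-project}-style spreading out) there exists $\alpha$ and an open $U_\alpha \subset Y_\alpha \times \A^1_k$ containing $Y_\alpha \times \{0,1\}$ pulling back to $U$; and two such $U_\alpha, U_{\alpha'}$ become equal after enlarging the index. Writing $\mathcal{U}(Z)$ for the filtered poset of open neighbourhoods of $Z \times \{0,1\}$ inside $Z \times \A^1$, this says precisely that $\mathcal{U}(Y) = \colim_\alpha \mathcal{U}(Y_\alpha)$ as filtered categories.

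Granting this, the verification is bookkeeping with filtered colimits:
\begin{align*}
(\ratC p^* F)(Y) &= \colim_{U \in \mathcal{U}(Y)} (p^*F)(U) = \colim_{U \in \mathcal{U}(Y)} \colim_\alpha F(U_\alpha) \\
&\wequi \colim_\alpha \colim_{U_\alpha \in \mathcal{U}(Y_\alpha)} F(U_\alpha) = \colim_\alpha (\ratC F)(Y_\alpha) = (p^* \ratC F)(Y),
\end{align*}
using that both colimits are filtered so they commute, and that any $U_\alpha$ yields an admissible $U$ over $Y$ by pullback. Naturality in $Y$ gives the asserted isomorphism of presheaves $p^* \ratC F \wequi \ratC p^* F$.

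For the final sentence, if $s : F \to \ratC F$ witnesses rational contractibility of $F$, then $p^* s : p^* F \to p^* \ratC F \wequi \ratC p^* F$ still satisfies $i_0^*(p^*s) \wequi \id$ and $i_1^*(p^* s)$ constant, because $p^*$ is compatible with the $i_0^*, i_1^*$ maps (both are defined by pullback along a morphism in $\Sm_k$, and $p^*$ preserves such pullbacks). The main obstacle is the spreading-out step identifying $\mathcal{U}(Y) \simeq \colim_\alpha \mathcal{U}(Y_\alpha)$; everything else is formal manipulation of filtered colimits.
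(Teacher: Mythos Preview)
Your argument is correct and is exactly the standard spreading-out/cofinality argument one expects here; the paper itself does not spell this out but simply defers to \cite[Lemma 2.2]{bachmann-criterion}, whose proof is essentially the computation you wrote down. In particular your identification of the neighbourhood poset $\mathcal U(Y)$ as the filtered colimit of the $\mathcal U(Y_\alpha)$ (using that open immersions and the closed condition $Y_\alpha \times \{0,1\} \subset U_\alpha$ both descend through the limit after enlarging $\alpha$) is the crux, and the rest is indeed formal.
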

\begin{proof}
The proof of \cite[Lemma 2.2]{bachmann-criterion} goes through unchanged.
\end{proof}

\begin{lemma} \label{lemm:Hom-contr}
Let $F$ be rationally contractible, and assume that $F$ promotes to a presheaf of grouplike commutative monoids.\NB{is this necessary?}
Then $\iMap(\A^n, F)$ is rationally contractible.
\end{lemma}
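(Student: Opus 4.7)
The plan is to proceed by induction on $n$, reducing to the key case $n = 1$, and then to construct by hand a rational contraction of $\iMap(\A^1, F)$ from the given one on $F$.

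For the inductive step, using the canonical equivalence $\iMap(\A^{n+1}, F) \simeq \iMap(\A^1, \iMap(\A^n, F))$ together with the fact that being a presheaf of grouplike commutative monoids is preserved by any internal mapping object (since the monoid and grouplike conditions are encoded by limits, which are computed sectionwise), the inductive hypothesis reduces the problem to the case $n = 1$.

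For $n = 1$, we must exhibit a map $s' \colon \iMap(\A^1, F) \to \ratC \iMap(\A^1, F)$ with $i_0^* s' \simeq \id$ and $i_1^* s'$ constant. For $X \in \Sm_k$, this amounts to sending $\beta \in F(X \times \A^1) = \iMap(\A^1, F)(X)$ to an element of $\colim_U F(U \times \A^1)$, where $U$ ranges over open neighborhoods of $X \times \{0,1\}$ in $X \times \A^1$. The naive strategy, to apply $s$ at $Y = X \times \A^1$, produces only an element of $(\ratC F)(X \times \A^1) = \colim_V F(V)$, where $V$ ranges over open neighborhoods of $X \times \A^1 \times \{0,1\}$ in $X \times \A^1 \times \A^1$. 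The hard part is bridging this gap: a typical $V$ need not contain any cylindrical neighborhood $U \times \A^1$, as one sees for example by taking $X = \Spec k$ and removing from $\A^2$ a plane curve which avoids the two lines $\A^1 \times \{0,1\}$ but whose projection onto the homotopy coordinate is surjective. Consequently, the two indexing posets of opens are incomparable in either direction, and no purely formal restriction or pullback argument can succeed.

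To overcome this, we use the hypothesis that $F$ is a presheaf of grouplike commutative monoids, following the strategy of Suslin's original argument for abelian presheaves in \cite[\S2]{suslin2003grayson}. The additive structure allows one to ``redistribute'' the defect by decomposing the output of $s$ (sectionwise) as a difference of pieces, each of which can be arranged to live on a cylindrical open $U \times \A^1$ at the cost of adding and subtracting boundary contributions that cancel thanks to grouplikeness. Once $s'$ has been constructed in this way, naturality and the corresponding properties of $s$ immediately give $i_0^* s' \simeq \id$ and $i_1^* s'$ constant, completing the inductive step and hence the proof.
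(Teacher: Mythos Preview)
Your proposal correctly identifies the central obstruction: applying $s$ to a section over $X \times \A^1$ only produces an element of $\colim_V F(V)$ with $V$ an open neighborhood of $X \times \A^1 \times \{0,1\}$, whereas you need one defined on a cylindrical open $U \times \A^1$. But your resolution of this obstruction is not a proof. The sentence about grouplikeness letting you ``redistribute the defect'' as a difference of pieces living on cylindrical opens is a slogan, not a construction: you do not say what the pieces are, how they are produced from $s(\beta)$, or why any of them extend over a set of the form $U \times \A^1$. A bare citation of ``Suslin's strategy'' does not fill the gap, and Suslin does not in fact prove this particular statement by an add--subtract trick on open subsets of $\A^2$ of the kind you describe.

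The paper's argument avoids the whole comparison-of-open-neighborhoods problem by a single clean observation. Since $* \to \A^n$ admits a retraction, the grouplike structure splits the fiber sequence $\iMap_*(\A^n, F) \to \iMap(\A^n, F) \to F$, so that $\iMap(\A^n, F) \simeq \iMap_*(\A^n, F) \times F$. The factor $F$ is rationally contractible by hypothesis. The factor $\iMap_*(\A^n, F)$ is not merely rationally but genuinely $\A^1$-contractible: the linear contraction $(x,t) \mapsto tx$ of $\A^n$ to its base point induces an honest $\A^1$-homotopy contracting $\iMap_*(\A^n, F)$ to the constant map. One then concludes via the already-recorded facts that $\A^1$-contractible presheaves are rationally contractible (Example~\ref{ex:contr-rat-contr}) and that products of rationally contractible presheaves are rationally contractible (Example~\ref{ex:product-contr}). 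No induction on $n$ is needed, and the delicate bookkeeping with open subsets of $X \times \A^1 \times \A^1$ never arises. The role of grouplikeness here is thus only to provide the splitting, not to perform any cancellation inside $\ratC F$.
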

\begin{proof}
Since $* \to \A^n$ admits a retraction, the group structure allows us to split the fibration sequence \[ \iMap_*(\A^n, F) \to \iMap(\A^n, F) \to F, \] i.e. $\iMap(\A^n, F) \wequi \iMap_*(\A^n, F) \times F$.
By Examples \ref{ex:product-contr} and \ref{ex:contr-rat-contr} it thus suffices to show that $\iMap_*(\A^n, F)$ is $\A^1$-contractible, which is well-known (an $\A^1$-homotopy contracting $\A^n$ to the base point induces an $\A^1$-homotopy contracting $\iMap_*(\A^n, F)$ to its base point).
\end{proof}

\begin{corollary} \label{cor:ratl-contr-crit}
Let $F$ be a presheaf of grouplike commutative monoids on $\Sm_k$ such that the underlying presheaf of spaces is rationally contractible.
Let $K$ be the field of fractions of a smooth connected $k$-scheme.
Then \[ |(L_{\A^1} F)(\hat\Delta^\bullet_K)| \wequi *. \]
\end{corollary}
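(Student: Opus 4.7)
The plan is to reduce via base change to the case $k = K$, then to use the standard Suslin singular model of $L_{\A^1}$ together with an exchange of geometric realizations in order to reduce the whole statement to Lemmas \ref{lemm:Hom-contr} and \ref{lemm:ratr-contr-real}.

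Since $K$ is the field of fractions of a smooth connected $k$-scheme, the extension $K/k$ is separable. By Lemma \ref{lemm:basechange-ratl-contr}, the restriction $p^*F$ along $p \colon \Spec(K) \to \Spec(k)$ is again rationally contractible, and it is still a presheaf of grouplike commutative monoids. Since both $L_{\A^1}$ and evaluation on $\hat\Delta^\bullet_K$ only see $F$ through its restriction to $\Sm_K$, I may assume $k = K$ from now on, reducing the claim to showing $|(L_{\A^1} F)(\hat\Delta^\bullet_k)| \wequi *$.

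Next, I would invoke that the $\A^1$-localization of any presheaf of spaces is modeled by the Suslin singular construction $\mathrm{Sing}^{\A^1}_\bullet F$, defined by $(\mathrm{Sing}^{\A^1}_n F)(X) = F(X \times \A^n) \wequi \iMap(\A^n, F)(X)$, since $F \to |\mathrm{Sing}^{\A^1}_\bullet F|$ is always an $\A^1$-equivalence onto an $\A^1$-invariant presheaf. Substituting this model and using that the geometric realization of a bisimplicial space can be computed as an iterated realization in either order, I obtain
\[
|(L_{\A^1} F)(\hat\Delta^\bullet_k)| \wequi \bigl| n \mapsto |\iMap(\A^n, F)(\hat\Delta^\bullet_k)| \bigr|.
\]

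For each fixed $n$, Lemma \ref{lemm:Hom-contr} (using the assumption that $F$ is grouplike) shows that $\iMap(\A^n, F)$ is rationally contractible, and then Lemma \ref{lemm:ratr-contr-real} yields that the inner realization $|\iMap(\A^n, F)(\hat\Delta^\bullet_k)|$ is contractible. The outer simplicial space is thus levelwise contractible, so its geometric realization is contractible as well, which finishes the proof. The main point to check carefully is the identification of $L_{\A^1}$ with $|\mathrm{Sing}^{\A^1}_\bullet|$ at the level of presheaves of spaces, together with the exchange of the two realizations; once these are in place, the statement follows as a direct application of the rational contractibility lemmas proved earlier in the appendix.
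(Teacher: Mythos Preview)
Your proposal is correct and follows essentially the same route as the paper: reduce to $K=k$ by base change (the paper phrases this as $p^*$ commuting with $L_{\A^1}$ and $\ratC$), model $L_{\A^1}$ by the singular construction, swap the two simplicial directions, and then apply Lemmas~\ref{lemm:Hom-contr} and~\ref{lemm:ratr-contr-real} to see each inner realization is contractible. The only cosmetic difference is that the paper writes the exchange step as ``colimits commute'' rather than ``exchange of bisimplicial realizations'', and concludes by noting $\Delta$ is contractible rather than invoking that a levelwise contractible simplicial space has contractible realization; these are the same arguments in different words.
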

\begin{proof}
Write $p: \Spec(K) \to \Spec(k)$ for the base change.
Since $p^*$ commutes with $L_{\A^1}$ (e.g. by \cite[Lemma A.4]{hoyois-algebraic-cobordism}) and $\ratC$ (by Lemma \ref{lemm:basechange-ratl-contr}), we may assume that $K=k$.
Since colimits commute we find that \[ |(L_{\A^1} F)(\hat\Delta^\bullet_K)| \wequi \colim_{n \in \Delta^\op} (L_{\A^1} F)(\hat \Delta^n) \wequi \colim_{n \in \Delta^\op} \colim_{m \in \Delta^\op} F(\A^m \times \hat\Delta^n) \wequi \colim_{m \in \Delta^\op} \colim_{n \in \Delta^\op} \iMap(\A^m, F)(\hat \Delta^n). \]
By Lemmas \ref{lemm:Hom-contr} and \ref{lemm:ratr-contr-real}, each of the inner colimits is contractible, and hence so is the total colimit, $\Delta$ being sifted and hence contractible \cite[Proposition 5.5.8.7]{lurie-htt}.
\end{proof}

If $F$ is a presheaf of commutative monoids then so is $\ratC F$.
Indeed the colimit defining $(\ratC F)(X)$ is filtered, and hence may be computed in commutative monoids or spaces, with the same result.
We say that $F$ is rationally contractible as a presheaf of commutative monoids if there is a morphism of presheaves of commutative monoids $s: F \to \ratC F$ such that $i_0^*s \wequi \id_F$ and $i_1^*s \wequi *$, as morphisms of presheaves of commutative monoids.
Clearly if $F$ is rationally contractible as a presheaf of commutative monoids then it is also rationally contractible in the previous sense.
\begin{lemma} \label{lemm:ratl-contr-gp}
On presheaves of commutative monoids, $\ratC$ commutes with group completion.
In particular if $F$ is rationally contractible as a presheaf of commutative monoids, then $F^\gp$ is also rationally contractible (as a presheaf of commutative monoids).
\end{lemma}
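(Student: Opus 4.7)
The plan is essentially formal, since all the heavy lifting has been done by the observation just preceding the lemma (that the colimit defining $\ratC F$ is filtered, so may be computed indifferently in spaces or in commutative monoids).

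First I would prove the main claim: that for any presheaf of commutative monoids $F$, the natural comparison map $(\ratC F)^\gp \to \ratC(F^\gp)$ is an equivalence. Working sectionwise at $X \in \Sm_k$, we have $(\ratC F)(X) = \colim_U F(U)$, a filtered colimit in $\CMon(\Spc)$ indexed by open subschemes $U \subset X \times \A^1$ containing $X \times \{0,1\}$. Group completion $\CMon(\Spc) \to \CMon(\Spc)^\gp$ is a left adjoint to the inclusion of grouplike monoids, hence preserves all colimits. Therefore
\[ ((\ratC F)(X))^\gp \wequi \colim_U (F(U))^\gp = (\ratC F^\gp)(X), \]
naturally in $X$, which gives the desired identification of presheaves of commutative monoids.

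For the ``in particular'' part, suppose $s \colon F \to \ratC F$ exhibits $F$ as rationally contractible as a presheaf of commutative monoids, so that $i_0^* s \wequi \id_F$ and $i_1^* s \wequi *$ as morphisms of presheaves of commutative monoids. Applying the functor $(\ph)^\gp$ and composing with the equivalence just established yields
\[ s^\gp \colon F^\gp \to (\ratC F)^\gp \wequi \ratC(F^\gp), \]
and by functoriality $i_0^* s^\gp \wequi \id_{F^\gp}$ and $i_1^* s^\gp \wequi *^\gp \wequi *$, exhibiting $F^\gp$ as rationally contractible as a presheaf of commutative monoids.

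There is no real obstacle to this argument: once the filtered nature of the colimit defining $\ratC$ is in place, the fact that group completion is a left adjoint does all the work. The only thing to be mildly careful about is that the identifications and compatibilities are taking place in the correct category (presheaves of commutative monoids, not merely presheaves of spaces), but this is ensured throughout by the sectionwise description.
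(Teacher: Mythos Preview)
Your proof is correct and follows essentially the same approach as the paper: both reduce to the observation that group completion commutes with the filtered colimits defining $\ratC$ and preserves the final object. The paper is marginally more explicit on one point: to identify $\colim_U (F(U))^\gp$ (a colimit in $\CMon^\gp$, via the left adjoint) with $(\ratC F^\gp)(X)$ (a colimit in $\CMon$ or $\Spc$), one also uses that grouplike commutative monoids are closed under filtered colimits in $\CMon$, which you use implicitly in your displayed equation.
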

\begin{proof}
It suffices to show that group completion of commutative monoids preserves filtered colimits and final objects.
The first statement holds since group completion is a localization and grouplike monoids are closed under filtered\NB{in fact all} colimits, and the second statement is obvious since the final commutative monoid is grouplike.
\end{proof}

\bibliographystyle{alpha}
\bibliography{bibliography}

\Addresses

\end{document}